\newenvironment{enumerate1}
{\begin{enumerate}[\upshape (1)]}
{\end{enumerate}}
\newtheorem{dummy}{dummy}[section]
\newtheorem{lemma}[dummy]{Lemma}
\newtheorem{theorem}[dummy]{Theorem}
\newenvironment{customthm}[1]
  {\innercustomthm}
  {\endinnercustomthm}
\newtheorem{proposition}[dummy]{Proposition}
\theoremstyle{definition}
\newtheorem{definition}[dummy]{Definition}
\newtheorem{example}[dummy]{Example}
\newtheorem{remark}[dummy]{Remark}
\newcommand{\bA}{\mathbb{A}}
\newcommand{\bC}{\mathbb{C}}
\newcommand{\bG}{\mathbb{G}}
\newcommand{\bH}{\mathbb{H}}
\newcommand{\bN}{\mathbb{N}}
\newcommand{\bQ}{\mathbb{Q}}
\newcommand{\bR}{\mathbb{R}}
\newcommand{\bZ}{\mathbb{Z}}
\newcommand{\cA}{\mathcal{A}}
\newcommand{\cH}{\mathcal{H}}
\newcommand{\cO}{\mathcal{O}}
\newcommand{\cR}{\mathcal{R}}
\newcommand{\cX}{\mathcal{X}}
\newcommand{\cY}{\mathcal{Y}}
\newcommand{\Spec}{\mathrm{Spec}\,}
\newcommand{\Hom}{\mathrm{Hom}}
\newcommand{\Div}{\mathrm{Div}}
\renewcommand{\log}{\mathrm{log}}
\newcommand{\gp}{\mathrm{gp}}
\newcommand{\an}{\mathrm{an}}
\newcommand{\class}{\mathrm{B}}
\newcommand{\et}{\mathrm{\acute{e}t}}
\renewcommand{\top}{\mathrm{top}}
\newcommand{\mon}{\mathrm{Mon}}
\newcommand{\bRp}{\bR_{\geq 0}}
\newcommand{\bRps}{\bRp\times S^1}
\newcommand{\Logst}{\mathfrak{Logst}}
\newcommand{\Topst}{\mathfrak{Topst}}
\newcommand\radice[2][\relax]{\hspace{-1.5pt}\sqrt[\uproot{2}#1]{#2}}
\newcommand{\Sch}{\mathrm{Sch}}
\begin{document}

\title[The Kato-Nakayama space as a transcendental root stack]{The Kato-Nakayama space \\ as a transcendental root stack}

\author{Mattia Talpo}
\address{Department of Mathematics\\
University of British Columbia\\
1984 Mathematics Road\\
Vancouver, BC, V6T 1Z2\\
Canada}
\email{mtalpo@math.ubc.ca}

\author[Angelo Vistoli]{Angelo Vistoli$^\dagger$}
\address{Scuola Normale Superiore\\Piazza dei Cavalieri 7\\
56126 Pisa\\ Italy}

\email{angelo.vistoli@sns.it}

\thanks{$^\dagger$Partially supported by research funds from the Scuola Normale Superiore}

\maketitle

\begin{abstract}
We give a functorial description of the Kato-Nakayama space of a fine saturated log analytic space, that is similar in spirit to the functorial description of root stacks. As a consequence we get a global description of the comparison map constructed in \cite{knvsroot} from the Kato-Nakayama space to the (topological) infinite root stack.\end{abstract}

\setcounter{tocdepth}{1}

\tableofcontents

\section{Introduction}

Let $X$ be a fine saturated log scheme, locally of finite type over $\bC$, or a log analytic space. There have been a few constructions aimed at capturing the ``log geometry'' of $X$ in more familiar forms. Two of those are the ``Kato-Nakayama'' space $X_{\log}$ (a topological space, introduced in \cite{KN}), and the ``infinite root stack'' $\radice[\infty] X$ (a pro-algebraic stack, introduced in \cite{TV}). As mentioned in the introduction of \cite{TV}, the latter is, morally, an ``algebraic version'' of the former.

Building on this idea, in the paper \cite{knvsroot} by Carchedi, Scherotzke, Sibilla and the first author it is shown that there is a canonical morphism $\Phi_X\colon X_\log\to \radice[\infty]{X}_\top$ from the Kato-Nakayama space to the topological realization of the infinite root stack of $X$, that is moreover a ``profinite equivalence''. In that paper the morphism is constructed locally on $X$, in presence of a Kato chart for the log structure, and then globalized by gluing \cite[Section 4]{knvsroot}.

Two natural questions arise from this construction.
\begin{enumerate1}
\item Is there a global definition of the morphism $\Phi_X$? For example, one could hope to use a ``functor of points'' point of view, describing the objects of the groupoid  $\radice[\infty]{X}_\top$, and then producing an object of $\radice[\infty]{X}_\top(X_\log)$.

\item Over $X_\log$ there is a sheaf of rings $\cO_X^\log$ that makes the projection $X_\log\to X$ into a map of ringed spaces, and over $\radice[\infty]{X}_\top$ there is a natural structure sheaf  $\cO_\infty$. Does $\Phi_X$ extend to a morphism of ringed topological stacks?
\end{enumerate1}

In this paper we answer the first question, and give such a global construction of $\Phi_{X}$. Question $(2)$ is addressed in the recent preprint \cite{parabolic} by the first author, where a notion of ``coherent sheaf'' on $X_\log$ and the relationship with parabolic sheaves with real weights are discussed.

In order to answer question $(1)$ we will give a functorial description of $\radice[\infty]{X}_\top$, and produce an object of the corresponding kind on the topological space $X_\log$. We will do so by giving a ``root stack'' functorial definition of $X_{\log}$, that is closely related to the one given in \cite{illusie-kato-nakayama}. From this it will be apparent that an object parametrized by the Kato-Nakayama space induces compatible $n$-th roots for each positive integer $n$.

In some more detail: we use the point of view of \cite{borne-vistoli}, according to which a log structure on $X$ can be seen as a symmetric monoidal functor $L\colon A\to [\bC/\bC^\times]_X$ from a sheaf of monoids to a stack of line bundles with global section on open subsets of $X$. Here the monoial structure on $[\bC/\bC^\times]_X$ is given by tensor product of line bundles with a section, and the functor $L$ is compatible with this structure. Recall also that for $n\in \bN$, the root stack $\radice[n]{X}$ parametrizes liftings of the functor $L$ along the $n$-th power map $\wedge n\colon [\bC/\bC^\times]\to [\bC/\bC^\times]$, induced by $z\mapsto z^n$ on both the space $\bC$ and the group $\bC^\times$ (and corresponding to raising both the line bundle and the global section to the $n$-th power). 

The Kato-Nakayama space turns out to parametrize similar liftings, in which instead of extracting $n$-th roots for a fixed $n$ we are in some sense extracting a ``logarithm'', i.e. we are lifting the log structure along a sort of ``exponential'' $\cH\to [\bC/\bC^\times]$, where $\cH$ is the stack constructed as follows.

{Let $\overline{\bC}$ be the topological monoid $(\{-\infty\}\cup \bR)\times \bR$, where the operation is addition on both factors, and we declare that $-\infty+x=-\infty$ for every $x\in \{-\infty\}\cup \bR$. Using the exponential $\{-\infty\}\cup \bR\xrightarrow{\cong} \bR_{\geq 0}$ in the first factor, the monoid $\overline{\bC}$ can also be seen as the ``closed right half-plane'' $\bH=\bR_{\geq 0}\times \bR\subseteq \bC$, equipped with the operation $(x,y)\cdot (x',y')=(xx',y+y')$. The additive group $\bC^+$ of complex numbers acts on $\overline{\bC}$ by ``translation'', i.e. as $(a+ib)\cdot (x,y)=(a+x, b+y)$, compatibly with the monoid structure. We have an exponential map $\overline{\bC}\to \bC$ sending $(x,y)$ to $e^{x+iy}$ (which is $0$ if $x=-\infty$), that is $\exp$-equivariant, for $\exp\colon \bC^+\to \bC^\times$ the usual exponential. The stack $\cH$ alluded to above is the quotient $\cH=[\overline{\bC}/\bC^+]$, that by the preceding observations admits a map $\exp\colon \cH\to [\bC/\bC^\times]$.}

The following theorem, that gives a functorial description of the Kato-Nakayama space as a sort of ``transcendental root stack'', is our main result.

\begin{customthm}{A}[Theorem \ref{theorem:comparison}]\label{thm:A}
Let $X$ be a fine saturated log analytic space, with DF structure $L\colon A\to [\bC/\bC^\times]_X$. Then the stack on topological spaces over $X_\top$ that sends $f\colon T\to X_\top$ to the groupoid of symmetric monoidal functors $f^{-1}A \to \cH_T$ lifting the functor $f^{-1}L\colon f^{-1}A\to [\bC/\bC^\times]_T$ is represented by the Kato-Nakayama space $X_\log$.
\end{customthm}

As a consequence, we obtain, in Section \ref{subsec:morphism}, a global description of the canonical morphism  $\Phi_X\colon X_\log\to \radice[\infty]{X}_\top$ constructed in \cite[Section 4]{knvsroot}: for every $n$ there is a factorization
   $$
   \cH\to [\bC/\bC^\times]\stackrel{\wedge n}{\longrightarrow}[\bC/\bC^\times]
   $$
 of the map $\exp\colon \cH\to [\bC/\bC^\times]$, given by the morphisms $\overline{\bC}\to \bC$ sending $(x,y)$ to $e^{(x+iy)/n}$ and $\bC^+\to \bC^\times$ defined as $z\mapsto e^{z/n}$. Consequently, because of Theorem \ref{thm:A}, for every object of the groupoid $X_\log(T)$ we obtain a compatible system of objects of the groupoids $\radice[n]{X}_\top(T)$, i.e. an object of $\radice[\infty]{X}_\top$ over $T$. This describes the morphism $\Phi_X$ in functorial terms.
 
In the last paragraph,  $\radice[n]{X}_\top$ denotes the underlying ``topological stack'' of the $n$-th root stack of $X$. In order to obtain a functorial description of these stacks, we have to develop a bit of theory for a kind of ``complex-valued'' log structures on topological spaces. It turns out, in fact, that the topological stack $\radice[n]{X}_\top$ coincides with the $n$-th root stack (in this theory of ``log topological spaces'') of the log topological space $X_\top$ (Proposition \ref{prop:root.comparison}).
 
Finally, in proving that the morphism $X_\log\to \radice[\infty]{X}_\top$ that we obtain coincides with the one of \cite{knvsroot} (which we do in Proposition \ref{prop:morphism}), we also point out, in Section \ref{sec:5}, that the Kato-Nakayama construction can be applied to log algebraic (or analytic) stacks, by mimicking the construction of the analytification functor for stacks (recalled briefly in Section \ref{sec:analytic.stacks}).

\subsection*{Outline}

Section \ref{sec:log.str.analytic.top} contains the basics of log structures on analytic and topological spaces, both in the language of Kato \cite{kato} and in the alternative ``Deligne--Faltings'' language introduced in \cite{borne-vistoli}. We describe spaces and stacks of charts, and consider root stacks in this general framework, proving in particular that the formation of root stacks is compatible with the analytification and ``underlying topological space'' functors.

In Section \ref{sec:4} we describe our functorial interpretation of the Kato-Nakayama space, that is the translation in the Deligne-Faltings language of the one given in \cite[Section 1]{illusie-kato-nakayama} (recalled in this paper as Theorem \ref{thm.functorial}). We describe the natural ``charts'' for $X_\log$ that correspond to this description, and we produce a globally defined morphism from the Kato-Nakayama space to the topological infinite root stack.

To conclude, in Section \ref{sec:5} we prove that the Kato-Nakayama construction can be extended to algebraic (and analytic) stacks, and we check that the morphism to the infinite root stack produced in the previous section coincides with the one of \cite{knvsroot}.

\subsection*{Acknowledgments}

We are happy to thank Kai Behrend for a key idea, and David Carchedi,
Nicol\`{o} Sibilla and Jonathan Wise for useful conversations. {We are also grateful to the anonymous referee for useful comments and suggestions.}

\subsection*{Notations and conventions}

We assume some familiarity with log geometry. For an introduction, see for example \cite{kato} or \cite[Appendix]{knvsroot}. We are mostly interested in fine and saturated log structures.

By the results of \cite{borne-vistoli}, for schemes the ``Kato language'' is equivalent to the  ``Deligne-Faltings'' language.

All our monoids will be commutative. If $P$ is a monoid and $X$ is a monoid with some additional structure (for example a topological space), we will denote by $X(P)$ the object $\Hom_\mon(P,X)$ with its naturally induced additional structure. For example we can take $X=\bRp$ to be the topological monoid of non-negative real numbers with respect to multiplication, and then $\bRp(P)$ will denote the topological monoid $\Hom(P,\bRp)$. We will denote by $\widehat{P}$ the diagonalizable group scheme $\Spec \bC[P^\gp]$ associated with the abelian group $P^\gp$. The sheafification of the constant presheaf with sections $P$ will be denoted by $\underline{P}$.

For symmetric monoidal categories we adopt the language and conventions of \cite{borne-vistoli} (see in particular Section 2.4).

All our algebraic spaces will be locally separated. If $X$ is a scheme (or algebraic space) over $\bC$, we write $X_\et$ for the small \'etale site of $X$. {If $X$ is an analytic (resp. topological) space we will denote by $\cA_X$ the small analytic (resp. classical) site of $X$}. If $X$ is a locally separated algebraic space that is locally of finite type over $\bC$, we will denote by $X_\an$ its analytification as an analytic space, and by $X_\top$ the underlying topological space of $X_\an$. Although $X_\top$ and $X_\an$ are the \emph{same} topological space, we usually prefer to keep the two symbols distinct, so that it will be clear whether we are in the analytic or topological world. 

We will denote by $\cO_X$ the structure sheaf of either a scheme (or algebraic space) or of an analytic space.


\section{Log structures on analytic and topological spaces}\label{sec:log.str.analytic.top}

In order to give a functorial interpretation of the topological infinite root stack $\radice[\infty]{X}_\top=\varprojlim_n \radice[n]{X}_\top$ (whose definition is recalled later) of a fine saturated log  analytic space $X$, we need to introduce a notion of log structures on a topological space. The analytic space $X$ itself could be of the form $Y_\an$ for a fine saturated log algebraic space $Y$ locally of finite type over $\bC$, so we also take the intermediate step of discussing log structures on analytic spaces in the language of \cite{borne-vistoli}.

The definitions and facts of this section can be formulated in the language of topoi with a sheaf of monoids (in the style of \cite{molcho}, that discusses log structures in the sense of Kato on certain categories of ``spaces''). A detailed treatment employing this language will appear elsewhere.

The proofs in this section will be somewhat terse. The interested reader can look at the more detailed treatment of \cite{borne-vistoli}, in the algebraic case.

In this section $X$ will be either a complex analytic space, or a topological space. We will denote by $\cA_X$ the classical site of $X$ (i.e. the site whose objects are open subsets of $X$, maps are inclusions and coverings are families of jointly surjective maps), and by $\cO_X$ the sheaf of rings of complex analytic functions in the analytic case, and of continuous complex-valued function in the topological case. We will use the term ``line bundle'' to indicate holomorphic line bundles and continuous complex line bundles, respectively.

\begin{remark}
Log structures in the analytic context have already been considered in the literature, see for example \cite{illusie-kato-nakayama}, and our notion coincides with the usual one. Log structures in a topological setting were considered, with a different spirit, in \cite{rognes}. We do not know what kind of relations there are between Rognes's definition and ours, if any.
\end{remark}

\subsection{Log and DF log structures}

The definitions that follow are the immediate generalization to our context of the ones of \cite{kato} and \cite{borne-vistoli}.

\begin{definition}\label{def.log.an.2}
A \emph{log structure} on $X$ is a sheaf of monoids $M$ on $\cA_X$ together with a map of sheaves of monoids $\alpha\colon M\to \cO_X$ that induces an isomorphism $\alpha|_{\alpha^{-1}(\cO_X)^\times}\colon \alpha^{-1}\cO_X^\times\stackrel{\cong}{\longrightarrow} \cO_X^\times$.
\end{definition}

It turns out that, as in the algebraic context, in presence of the mild assumption of quasi-integrality, this definition of a log structure is equivalent to the following.

Note that the quotient stack $\Div_X=[\cO_X/\cO_X^\times]$ on the site $\cA_X$ has a symmetric monoidal structure, induced by multiplication on $\cO_X$. It is moreover easy to check that it parametrizes pairs $(L,s)$ of a holomorphic (or continuous complex) line bundle with a global section, in analogy with the algebraic case, and the monoidal operation is identified with tensor product.

Later on, when we want to stress that we are considering things in the topological setting, we will denote the sheaf of continuous complex-valued functions on the topological space $T$ by $\bC_T$, and the stack $\Div_T$ by $[\bC/\bC^\times]_T$.

\begin{definition}\label{def.log.an.1}
A \emph{DF structure} on $X$ is a sheaf of sharp monoids $A$ on $\cA_X$ together with a symmetric monoidal functor $A\to \Div_X$ with trivial kernel.
\end{definition}

In this definition and from now on ``DF'' stands for ``Deligne--Faltings'', and ``trivial kernel'' means that if a section $a$ maps to an object that is isomorphic to $(\cO_X,1)$ (the unit object of $\Div_X$), then $a=0$. This is a particular instance of a ``Deligne-Faltings object'' as defined in \cite[Section 2]{borne-vistoli}.

One can define a category of log structures and a category of DF structures. A morphism will in both cases consist of a homomorphism of sheaves of monoids that is compatible with the structure map to $\cO_X$ ($\Div_X$ respectively, in the 2-categorical sense). Moreover, log structures and DF structures can be pulled back along morphisms of analytic or topological spaces. We refer the reader to \cite[Section 3]{borne-vistoli} for a detailed treatment, that also adapts to the present case.

Recall that a log structure is \emph{quasi-integral} if the action of $\cO_X^\times$ on $M$ is free.

\begin{proposition}\label{prop:katovsDF}
Let $X$ be an analytic (or topological) space. Then there is an equivalence of categories between quasi-integral log structures and DF structures on $X$.
\end{proposition}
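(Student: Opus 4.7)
The plan is to mimic the proof of the equivalence established in \cite[Section 3]{borne-vistoli} in the algebraic case, and verify that each construction transfers to the classical site $\cA_X$ with coefficient sheaf $\cO_X$. The arguments rely only on having a sheaf of rings whose units act freely on something, together with a gluing principle for line bundles with sections; both ingredients are available in the analytic and topological settings.

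First I would construct a functor from quasi-integral log structures to DF structures. Given $\alpha\colon M\to \cO_X$, set $A:=M/\alpha^{-1}\cO_X^\times$, which is a sheaf of sharp monoids because the quotient kills $M^\times=\alpha^{-1}\cO_X^\times$. Quasi-integrality ensures that each local section $a\in A$ lifts, locally on $\cA_X$, to an $\cO_X^\times$-torsor inside $M$; this torsor $T_a$ defines a line bundle $L_a$, and the restriction of $\alpha$ to $T_a$ descends to a global section $s_a$ of $L_a$. The assignment $a\mapsto (L_a,s_a)$ is compatible with the monoidal structures (tensor product of line bundles with their sections), hence defines a symmetric monoidal functor $L\colon A\to \Div_X$. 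Its kernel is trivial: if $(L_a,s_a)$ is isomorphic to $(\cO_X,1)$ on some open, a choice of trivialization produces a local lift of $a$ to $M$ whose image under $\alpha$ equals $1\in \cO_X^\times$, forcing the lift to lie in $\alpha^{-1}\cO_X^\times$ and hence $a=0$ in $A$.

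Conversely, given a DF structure $L\colon A\to \Div_X$ with $L(a)=(L_a,s_a)$, I would define the sheaf of monoids $M$ whose sections over $U\subseteq X$ are pairs $(a,\tau)$, where $a\in A(U)$ and $\tau\colon \cO_U\xrightarrow{\sim} L_a|_U$ is a trivialization, with monoidal operation $(a,\tau)\cdot(a',\tau')=(a+a',\tau\otimes \tau')$. The structure map $\alpha\colon M\to \cO_X$ is given by $\alpha(a,\tau)=\tau^{-1}(s_a)$. The sheaf property of $M$ is the usual gluing of trivialized line bundles on $\cA_X$, and is where one uses that we are on a genuine site. The $\cO_X^\times$-action by rescaling $\tau$ is manifestly free, giving quasi-integrality; and a section $(a,\tau)$ lies over $\cO_X^\times$ precisely when $\tau^{-1}(s_a)$ is invertible, which by the trivial-kernel assumption forces $a=0$ and hence identifies $\alpha^{-1}\cO_X^\times$ with $\cO_X^\times$ canonically.

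Finally, I would check that the two constructions are mutually quasi-inverse on objects and extend naturally to morphisms. In one direction, starting from $\alpha\colon M\to \cO_X$, the sheaf of pairs $(a,\tau)$ built from its associated DF structure is canonically identified with $M$ by sending $(a,\tau)$ to the unique lift of $a$ in $M$ corresponding to $\tau$; in the other direction, the $\cO_X^\times$-torsors produced from the monoid of pairs recover the original line bundles and sections tautologically. For morphisms, a compatible homomorphism $M\to M'$ of log structures descends to a homomorphism $A\to A'$ of characteristic sheaves and induces an isomorphism of $\cO_X^\times$-torsors $T_a\to T_{a'}$, hence a morphism of DF structures; conversely, a morphism of DF structures translates term-by-term into a morphism of monoids of pairs compatible with $\alpha$. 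The main potential obstacle is purely bookkeeping: verifying the 2-categorical compatibility of the monoidal structures and of the natural transformations under these passages, handled exactly as in \cite[Theorem 3.6]{borne-vistoli}.
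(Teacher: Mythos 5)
Your proposal is correct and is exactly the argument the paper intends: the paper's proof consists of the single remark that one adapts \cite[Theorem 3.6]{borne-vistoli} to the classical site $\cA_X$, and your two constructions (the quotient $A=M/\alpha^{-1}\cO_X^\times$ with its $\cO_X^\times$-torsors of lifts, and the inverse sheaf of pairs $(a,\tau)$, i.e.\ the fibered product $A\times_{\Div_X}\cO_X$) are precisely that adaptation. No discrepancy with the paper's approach.
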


\begin{proof}
The proof is a straightforward adaptation of the one of \cite[Theorem 3.6]{borne-vistoli}.
\end{proof}

As in the algebraic case, the proof shows that in comparing these two structures, the sheaf $A$ is identified with the \emph{characteristic sheaf} $\overline{M}=M/\cO_X^\times$.

\begin{definition}
A \emph{log analytic space} (resp. \emph{log topological space}) is an analytic space (resp. topological space) $X$ with a quasi-integral log structure $\alpha\colon M\to \cO_X$ (equivalently, with a DF structure $L\colon A \to \Div_X$).
\end{definition}

For the rest of the paper all Kato log structures will be quasi-integral,  we will drop the ``Kato'' and ``DF'', and just talk about log structures, and we will switch freely between the two notions and notations.

One defines morphisms of log analytic (or topological) spaces as in the algebraic case, by requiring a morphism $f\colon X\to Y$ together with a  map $f^{-1}M_Y\to M_X$, which is compatible with the morphisms to the structure sheaves. A morphism $f\colon X\to Y$ of log analytic (or topological) spaces is \emph{strict} if the map $f^{-1}M_Y\to M_X$ is an isomorphism.

\subsection{Charts}

Let us discuss local models for log structures in our context. The arguments of Sections 3.3 and 3.4 in \cite{borne-vistoli}  can be adapted without difficulties, but we will refrain from giving a fully detailed treatment.

Let $P$ be a finitely generated monoid. The analytic space $(\Spec \bC[P])_\an$ admits a ``tautological'' log structure, induced via sheafification by the map of monoids $P\to \bC[P]$. This coincides with the ``divisorial'' log structure induced by the open embedding $\widehat{P}_\an\subseteq (\Spec \bC[P])_\an$, i.e. the log structure obtained by considering the subsheaf $M$ of $\cO_X$ of functions that are invertible on $\widehat{P}_\an$ (we are using the notation $\widehat{P}$ of \cite{borne-vistoli} for the Cartier dual of $P^\gp$). The log structure is equivariant for the action of $\widehat{P}_\an$, and hence induces a log structure on the quotient stack $[(\Spec \bC[P])_\an/\widehat{P}_\an]$ (see Section \ref{sec:analytic.stacks} below for a brief reminder about analytic and topological stacks). In the topological setting, we can consider the underlying topological spaces $(\Spec \bC[P])_\top$ and $\widehat{P}_\top$, and the analogous quotient stack $[(\Spec \bC[P])_\top/\widehat{P}_\top]$. These objects will also be equipped with tautological log structures.

\begin{remark}
Strictly speaking, we have not defined log structures on analytic or topological stacks, but we trust that the reader interested in the subtlety will be able to fill the gap. For example, they can be seen as systems of compatible log structures on analytic (or topological) spaces mapping to the given stack, {as in \cite[Definition 2.10]{logmckay}}.\end{remark}

In order to uniformize the notation, in this section we will generally denote by $\bA(P)$ the analytic space $(\Spec \bC[P])_\an$ (reps. the topological space $(\Spec \bC[P])_\top$), and by $\cA(P)$ the quotient stack $[(\Spec \bC[P])_\an/\widehat{P}_\an]$ (resp. $[(\Spec \bC[P])_\top/\widehat{P}_\top]$).

These log structures on the stack $\cA(P)$ have a more natural interpretation in terms of DF structures. The following is the analogue of \cite[Proposition 3.25]{borne-vistoli}.

\begin{lemma}\label{lemma:DFchart}
Let $P$ be a fine sharp monoid, and $X$ an analytic (or topological) space. Then there is an equivalence between the category of maps $X\to \cA(P)$ and the category of symmetric monoidal functors $P\to \Div_X(X)$.
\end{lemma}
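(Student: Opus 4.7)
The plan is to adapt the proof of \cite[Proposition 3.25]{borne-vistoli} essentially verbatim, since the only ingredient that does not transport without comment from the algebraic to the analytic/topological setting is the description of principal bundles under a diagonalizable group in terms of line bundles, which however carries over for nice enough topoi such as $\cA_X$.

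First, I would unwind the definition of a morphism $X \to \cA(P) = [\bA(P)/\widehat{P}]$ as the data of a principal $\widehat{P}$-bundle $\pi\colon E \to X$ together with a $\widehat{P}$-equivariant morphism $E \to \bA(P)$. The strategy is then to decompose both pieces of data according to the $P^{\gp}$-grading coming from the diagonalizable structure of $\widehat{P}$.

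Next I would explain the character decomposition. Since $\widehat{P} = \Spec \bC[P^{\gp}]$ is diagonalizable, a principal $\widehat{P}$-bundle $E \to X$ is equivalent to the datum of a group homomorphism $P^{\gp}\to \Pic(X)$, $p\mapsto L_p$, together with isomorphisms $L_p\otimes L_q \cong L_{p+q}$ satisfying the evident cocycle conditions, where $L_p$ is the line bundle $(\pi_*\cO_E)_p$ obtained by taking the $p$-eigenspace of the $\widehat{P}$-action. This statement is standard in the algebraic world; in the analytic/topological setting it follows from the fact that $\widehat{P}_{\an}$ (resp.\ $\widehat{P}_{\top}$) is a product of copies of $\bC^\times$ and of finite cyclic groups, and each factor's classifying stack classifies line bundles (resp.\ complex line bundles with a chosen power trivialization) on the small site $\cA_X$. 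The $\widehat{P}$-equivariant map $E \to \bA(P) = \Spec \bC[P]$, by Cartier duality, corresponds dually to a homomorphism of $P^{\gp}$-graded $\cO_X$-algebras $\bC[P]\otimes_{\bC}\cO_X \to \pi_*\cO_E = \bigoplus_{p\in P^{\gp}} L_p$, which is completely determined by sections $s_p \in \Gamma(X, L_p)$ for each $p\in P$, compatible in the sense that $s_0 = 1$ and $s_p\cdot s_q = s_{p+q}$ under the isomorphism $L_p\otimes L_q\cong L_{p+q}$.

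The data $(L_p, s_p)_{p\in P}$ together with the multiplicativity constraints is exactly a symmetric monoidal functor $P\to \Div_X(X)$, which gives the desired equivalence of categories on objects. Functoriality in morphisms is straightforward to check: a morphism of $\widehat{P}$-bundles commuting with the equivariant maps translates, after passing to eigenspaces, into a compatible family of isomorphisms of line bundles with section, which is exactly a morphism of symmetric monoidal functors. Finally, the fact that sharpness of $P$ plays no essential role here, and the assertion is natural in $X$, so that the statement upgrades to an equivalence of the corresponding fibered categories over $\cA_X$, although this is not needed for the statement as phrased.

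The main obstacle I expect is the first step of the previous paragraph: formally justifying that a principal bundle for a diagonalizable (pro)torus in the analytic or topological site is controlled by $\Pic$ exactly as in the algebraic case. Once this foundational point is granted the rest of the proof is essentially bookkeeping, parallel to the argument in \cite{borne-vistoli}, and I would either invoke the general cohomological fact $H^1(X, \widehat{P}) = \Hom(P^{\gp}, H^1(X, \cO_X^\times))$ (valid for $X$ a paracompact analytic or topological space, since $\widehat{P}$ is an extension of a complex torus by a finite abelian group) or, equivalently, build the equivalence by hand using the explicit eigenspace decomposition of $\pi_*\cO_E$, as outlined above.
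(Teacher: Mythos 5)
Your route is genuinely different from the paper's. The paper first settles $P=\bN$ and $P=\bN^k$ directly, builds the functor $P\to\Div_X(X)$ from a map $X\to\cA(P)$ by restricting along the submonoids $\langle p\rangle\cong\bN$, and for the converse chooses a finite presentation $\bN^N\rightrightarrows\bN^r\to P$ and uses that $\cA(P)\to[\bC^r/(\bC^\times)^r]\rightrightarrows[\bC^N/(\bC^\times)^N]$ is an equalizer of analytic (resp.\ topological) stacks, a fact it imports from the algebraic statement \cite[Proposition 3.25]{borne-vistoli} via exactness of the analytification functor. You instead unwind a map to the quotient stack as a $\widehat{P}$-torsor with an equivariant map and decompose by characters. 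The torsor half of this (torsors under $\widehat{P}_\an$ or $\widehat{P}_\top$ correspond to symmetric monoidal functors $P^\gp\to\Pic$) does carry over as you anticipate, and you are right to flag it as needing justification.

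The genuine gap, however, is in the step you treat as routine: the Cartier-duality description of the equivariant map. In the analytic and topological settings the identification $\pi_*\cO_E=\bigoplus_{p\in P^\gp}L_p$ is false: already for $E=\bC^\times$ over a point, the holomorphic (or continuous) functions on $\bC^\times$ are far larger than $\bigoplus_n\bC z^n$, so the pushforward of the structure sheaf is not the sum of its eigensheaves. Moreover, a morphism of analytic (or topological) spaces $E\to\bA(P)$ is not the same datum as a homomorphism of $\cO_X$-algebras $\bC[P]\otimes_\bC\cO_X\to\pi_*\cO_E$; that equivalence is special to affine schemes. The statement you actually need --- that $\widehat{P}$-equivariant maps $E\to\bA(P)$ correspond to multiplicative families $s_p\in\Gamma(X,L_p)$ for $p\in P$ --- is still true, but must be argued differently: identify equivariant maps $E\to\bC$, with $\widehat{P}$ acting on $\bC$ through the character $p$, with sections of $L_p$; then choose generators $p_1,\dots,p_r$ of $P$, realize $\bA(P)$ as the closed subspace of $\bC^r$ cut out by the monomial relations, and check that the multiplicativity $s_p\cdot s_q=s_{p+q}$ forces the resulting map $E\to\bC^r$ to factor through $\bA(P)$. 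At that point one is essentially re-running the presentation argument of the paper, so the repair is available but it is more than the bookkeeping you describe. (A minor further point: sharpness is not entirely irrelevant to the paper's argument --- it is what guarantees $\langle p\rangle\cong\bN$ for $p\neq 0$ --- even though your decomposition would indeed not need it.)
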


Here $\Div_X(X)$ denotes the symmetric monoidal category of sections of the stack $\Div_X$ on the whole space $X$.

\begin{proof}
The case $P=\bN$ is clear from the fact that $[\cO_X/\cO_X^\times](X)$ is the category of line bundles with a section on $X$, and the same objects are parametrized by morphisms to $[\bC/\bC^\times]$. More generally, since $[\bC^k/(\bC^\times)^k]\cong [\bC/\bC^\times]\times \cdots \times [\bC/\bC^\times]$ where the product has $k$ factors, the conclusion follows also for $P=\bN^k$.

Next we show how to associate a symmetric monoidal functor $P\to \Div_X(X)$ to a map $X\to \cA(P)=[(\Spec \bC[P])_\an/\widehat{P}_\an]$ (we will use the notation for the analytic case - the topological case is analogous). Given $p\in P$, consider the submonoid $j\colon \langle p\rangle \subseteq P$ generated by $p$. Since $P$ is fine and sharp, $\langle p\rangle \cong \bN$. Consider the composite
$$
X\to [(\Spec \bC[P])_\an/\widehat{P}_\an]\to [\bC/\bC^\times]
$$
where the second map is induced by the inclusion $j$. This corresponds to an object $(L_p,s_p)$ of $\Div_X(X)$.

Consider now two elements $p, q\in P$. The object $(L_{p+q},s_{p+q})$ is determined by the morphism $$X\to  [(\Spec \bC[P])_\an/\widehat{P}_\an]\to [(\Spec \bC[\langle p+q \rangle])_\an/\widehat{\langle p+q \rangle}_\an]\cong [\bC/\bC^\times].$$
Now note that the map $[(\Spec \bC[P])_\an/\widehat{P}_\an]\to [(\Spec \bC[\langle p+q \rangle])_\an/\widehat{\langle p+q \rangle}_\an]$ is obtained from the ones corresponding to $p$ and $q$, by mapping to
\begin{align*}
[(\Spec \bC[\langle p,q \rangle])_\an/\widehat{\langle p,q \rangle}_\an] & \subseteq[(\Spec \bC[\langle p \rangle])_\an/\widehat{\langle p \rangle}_\an]\times [(\Spec \bC[\langle q \rangle])_\an/\widehat{\langle q \rangle}_\an]\\
&\cong  [\bC/\bC^\times]\times [\bC/\bC^\times]
\end{align*}
(where $\langle p,q \rangle$ denotes the submonoid of $P$ generated by $p$ and $q$), and then further to the quotient stack $[(\Spec \bC[\langle p+q \rangle])_\an/\widehat{\langle p+q \rangle}_\an]\cong  [\bC/\bC^\times]$, via $\otimes\colon   [\bC/\bC^\times]\times [\bC/\bC^\times]\to [\bC/\bC^\times]$.

Consequently, we obtain an isomorphism $(L_p,s_p)\otimes (L_q,s_q)\cong (L_{p+q},s_{p+q})$. This gives a symmetric monoidal structure to the assignment $p\mapsto (L_p,s_p)$, and we obtain a symmetric monoidal functor $P\to \Div_X(X)$.

To go in the opposite direction, let us take a presentation $f\colon \bN^r\to P$ with a finite number of relations $s_i=t_i$, where $s_i, t_i\in \bN^r$ for $i=1,\hdots,N$ (a finitely generated monoid is also finitely presented - this is R\'edei's theorem \cite[Theorem 72]{redei}). In other words $P$ is the coequalizer $\bN^N\rightrightarrows\bN^r\to P$ in the category of commutative monoids.

The given functor $P\to \Div_X(X)$ induces a functor $\bN^r \to \Div_X(X)$ such that the two composites $\bN^N\to \Div_X(X)$ are isomorphic. The case $P=\bN^k$ gives us a morphism $X\to [\bC^r/(\bC^\times)^r]$, with an isomorphism between the two composites $X\to  [\bC^N/(\bC^\times)^N]$.

Now we point out that the diagram
$$
[(\Spec\bC[P])_\an/\widehat{P}_\an]\to [\bC^r/(\bC^\times)^r]\rightrightarrows [\bC^N/(\bC^\times)^N]
$$
is an equalizer in analytic stacks. This follows from the algebraic analogue of what we are proving (which is Proposition 3.25 of \cite{borne-vistoli}) and the fact that the analytification functor (on algebraic stacks) preserves finite limits. This gives a morphism $X\to [(\Spec\bC[P])_\an/\widehat{P}_\an]$. One easily checks that the resulting functor is a quasi-inverse to the previous construction.
\end{proof}


The previous lemma gives the quotient stack $\cA(P)$ a universal DF structure (in both the analytic and topological cases).

\begin{definition}
A \emph{Kato chart} for $X$ is a strict morphism $X\to \bA(P)$. A \emph{DF chart} for $X$ is a strict morphism $X\to \cA(P)$.
\end{definition}


A Kato chart gives a DF chart by composing with the projection $\bA(P) \to \cA(P)$ (which is strict).

As in the algebraic case, one can check that a symmetric monoidal functor $P\to \Div_X(X)$, induces by sheafification a DF structure $A_P\to \Div_X$. The sheaf $A_P$ is obtained from the constant sheaf $\underline{P}$ on $X$ by killing the local sections that become invertible in $\Div_X$, so the map $\underline{P}\to A_P$ is a cokernel in the category of sheaves of monoids on $\cA_X$. This is, in fact, the definition of a chart in \cite[Section 3.3]{borne-vistoli}.

Analogously, a Kato chart corresponds to a homomorphism of monoids $P\to \cO_X(X)$ that induces the given log structure $M\to \cO_X$ by sheafifying to $\underline{\alpha}\colon \underline{P}\to \cO_X$, and taking the associated log structure $\underline{P}\oplus_{\underline{\alpha}^{-1}\cO_X^\times} \cO_X^\times\to \cO_X$.

\begin{definition}
A log analytic (or topological) space is \emph{coherent} if it locally admits Kato charts for finitely generated monoids.
\end{definition}

We will assume that all our log structures are coherent, and add adjectives such as ``fine'' and ``saturated'' with the usual meaning, i.e. that one can find charts with monoids $P$ that have the corresponding property. This will be equivalent to ask for the stalks of the characteristic monoid $\overline{M}$ to have the corresponding property (see \cite[Section 3.3]{borne-vistoli} for details).

\begin{remark}
One can check that for fine saturated log structures (in a quite general setting), locally admitting Kato charts is equivalent to locally admitting DF charts.
\end{remark}

\subsection{Analytic and topological stacks}\label{sec:analytic.stacks}

Let us briefly pause to recall the notions of analytic and topological stacks, and the extension of the analytification functor on schemes over $\bC$. We refer the reader to \cite{No1} for details, especially about the latter case.

As algebraic stacks over schemes on some base $S$ are defined as categories fibered in groupoids over $(\Sch/S)$ that satisfy a gluing condition and are presented by a groupoid $R\rightrightarrows U$ with ``nice'' structure maps (typically \'etale or smooth), analytic and topological stacks are defined in the same way by switching schemes with the appropriate kind of object.

For analytic stacks we consider the site of analytic spaces with the classical topology, and consider stacks that are presented by groupoids $R\rightrightarrows U$ where the structure maps are holomorphic submersions. We will use the term ``Deligne--Mumford'' to indicate stacks that can be presented with a groupoid where the structure maps are \'etale.

For the topological case, a topological stack will be a stack on the site of topological spaces with the classical topology, and admitting a presentation by a groupoid $R\rightrightarrows U$ with structure maps that are ``locally cartesian maps with Euclidean fibers'' - the analogue in this context of smooth maps (see \cite{No1}). We will say that a topological stack is ``Deligne--Mumford'' if it can be presented by a groupoid with \'etale structure maps (i.e. local homeomorphisms).

There is an analytification functor that produces an analytic stack from an algebraic stack (locally of finite type over $\bC$), and an ``underlying topological stack'' functor that produces a topological stack from an analytic stack. They both extend the natural analytification functor on schemes of finite type over $\bC$ and ``underlying topological space'' functor on analytic spaces, respectively.

\begin{remark}\label{rmk:analytification.functor}
Let us briefly sketch the construction of the analytification functor (the other case is analogous), and refer the reader to \cite[Section 20]{No1} for more details. We will apply the same process to the ``Kato-Nakayama functor'' in Section \ref{sec:5} in order to extend it to log algebraic stacks, and give a slightly more detailed proof (see Theorem \ref{thm:functor}).

Given an algebraic stack $\cX$ locally of finite type over $\bC$, we want to produce an analytic stack $(\cX)_\an$. Let us choose a presenting groupoid $R\rightrightarrows U$ for $\cX$, and consider the induced groupoid $R_\an\rightrightarrows U_\an$. This is a groupoid in analytic spaces, whose structure maps are holomorphic submersions.
Hence the quotient $[R_\an/U_\an]$ is an analytic stack, that we take to be the analytification $(\cX)_\an$. One can check that the construction does not depend on the presenting groupoid (up to unique isomorphism), and that this extends to a functor from algebraic stacks to analytic stacks.

A more conceptual proof can be given along the lines of \cite[Theorem 3.1]{knvsroot}, by constructing $(\cX)_\an$ via the left Kan extension of $(-)_\an$ along the Yoneda embedding. This gives for $(\cX)_\an$ the ``explicit'' formula
$$
(\cX)_\an=\varinjlim_{\Spec R\to \cX} (\Spec R)_\an
$$
where the colimit is a lax colimit in the 2-category of analytic stacks.
\end{remark}

\subsection{Root stacks}\label{sec:root.stacks}

Let us briefly discuss root stacks (\cite[Section 4]{borne-vistoli}) in the two settings analyzed in the previous sections. 

Given a sheaf of fine saturated monoids $A$ on $X$ and $n\in \bN$, consider the inclusion $i_n\colon A\to \frac{1}{n}A$. This can be also be seen as the map $A\to A$ that multiplies sections by $n$.

\begin{definition}
Let $X$ be a log analytic (or topological) space, and $n\in \bN$ a positive natural number.

The $n$-th root stack of $X$ is defined by assigning to an analytic (or topological) space $Y$ the groupoid $\radice[n]{X}(Y)$ of triples $(\phi, N, a)$, where $\phi\colon Y\to X$ is a morphism, $N\colon  \phi^{-1}\frac{1}{n}A\to \Div_Y$ is a symmetric monoidal functor with trivial kernel and $a$ is a natural equivalence from $\phi^{-1}L$ to the composite $N\circ i_n$. The arrows are the obvious ones.
\end{definition}

One easily checks that the formation of root stacks is compatible with strict base change.
Note also that If $n\mid m$ there is a natural projection $\radice[m]{X}\to \radice[n]{X}$, induced by the factorization $\frac{1}{n}A\subseteq \frac{1}{m}A$ of $i_m\colon A\to \frac{1}{m}A$.

\begin{definition}
The infinite root stack $\radice[\infty]{X}$ of $X$ is the inverse limit $\varprojlim_n \radice[n]{X}$.
\end{definition}

This object can be seen either as a pro-object, or as a stack over the category of analytic (or topological) spaces, although in the analytic case it is better to see it as a pro-object (see Remark \ref{rmk:too-small}).

As a stack, $\radice[\infty]{X}$ can also be seen as functorially parametrizing symmetric monoidal functors $A_\bQ\to \Div_X$ that extend the given $L\colon A \to \Div_X$, where $A_\bQ$ is the union $\bigcup_{n\in \bN} \frac{1}{n}A$ of all the Kummer extensions of $A$. See \cite[Section 3]{TV} for details.

\begin{proposition}\label{prop:rootDM}
Let $X$ be a fine saturated log analytic (or topological) space. Then for every $n$ the $n$-th root stack $\radice[n]{X}$ is an analytic (or topological) Deligne--Mumford stack.
\end{proposition}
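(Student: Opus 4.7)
The strategy is to prove the claim locally on $X$ via Kato charts, reduce to computing the root stack of the universal chart $\cA(P)$, and identify $\radice[n]{X}$ locally as the quotient of an analytic (resp.\ topological) space by a finite group.

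Since the formation of $\radice[n]{-}$ is compatible with strict base change, and the Deligne--Mumford property is local on the target, I may assume that $X$ admits a Kato chart $X\to \bA(P)$ for a fine saturated monoid $P$. The composition $X\to \bA(P)\to \cA(P)$ is then strict. By Lemma \ref{lemma:DFchart} applied to $\tfrac{1}{n}P$, the morphism $\cA(\tfrac{1}{n}P)\to \cA(P)$ induced by the inclusion $P\subseteq \tfrac{1}{n}P$ is the universal extension of the tautological DF functor from $P$ to $\tfrac{1}{n}P$, so $\radice[n]{\cA(P)}\cong \cA(\tfrac{1}{n}P)$. Strict base change then gives
\[
\radice[n]{X}\;\cong\;X\times_{\cA(P)}\cA(\tfrac{1}{n}P)\;\cong\;X\times_{\bA(P)}\!\left(\bA(P)\times_{\cA(P)}\cA(\tfrac{1}{n}P)\right).
\]

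The next step is to identify the middle fiber product as a finite-group quotient. The Cartier dual of $P^\gp\hookrightarrow \tfrac{1}{n}P^\gp$ is the $n$-th power homomorphism $\widehat{\tfrac{1}{n}P}\to \widehat{P}$, whose kernel $\mu_n(P^\gp)$ is Cartier dual to $\tfrac{1}{n}P^\gp/P^\gp\cong P^\gp/nP^\gp$; this is a finite diagonalizable group because $P$ is fine, hence an ordinary finite group after analytification or passage to the underlying topological space. A direct unwinding of $T$-valued points---a map $\phi\colon T\to \bA(P)$ together with a DF structure $\tfrac{1}{n}P\to \Div_T(T)$ whose restriction to $P$ matches the one induced by $\phi$, which forces the line bundles $L_q$ for $q\in \tfrac{1}{n}P$ to carry canonical trivializations of their $n$-th tensor powers---identifies
\[
\bA(P)\times_{\cA(P)}\cA(\tfrac{1}{n}P)\;\cong\;[\bA(\tfrac{1}{n}P)/\mu_n(P^\gp)].
\]
This is a direct translation to our setting of the argument in \cite[Section 4]{borne-vistoli} and is the main nontrivial step.

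Pulling back along the Kato chart yields $\radice[n]{X}\cong [Y/\mu_n(P^\gp)]$ with $Y:=X\times_{\bA(P)}\bA(\tfrac{1}{n}P)$ an analytic (resp.\ topological) space. The quotient of such a space by a finite group is Deligne--Mumford in the appropriate sense: the presenting groupoid $Y\times \mu_n(P^\gp)\rightrightarrows Y$ has source and target maps (projection and action) that are local biholomorphisms (resp.\ local homeomorphisms), because $\mu_n(P^\gp)$ is finite. The only genuine obstacle is the explicit description of the fiber product $\bA(P)\times_{\cA(P)}\cA(\tfrac{1}{n}P)$; once this is in hand, the Deligne--Mumford property is immediate.
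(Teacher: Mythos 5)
Your proposal is correct and follows essentially the same route as the paper: reduce to a global Kato chart by strict base change, use Lemma \ref{lemma:DFchart} to identify $\radice[n]{\cA(P)}$ with $\cA(\tfrac{1}{n}P)$, and then recognize $\bA(P)\times_{\cA(P)}\cA(\tfrac{1}{n}P)$ as the quotient $[\bA(\tfrac{1}{n}P)/\mu_n(P)]$ by the finite diagonalizable group dual to $\tfrac{1}{n}P^\gp/P^\gp$. Your added remark that a finite-group quotient groupoid has \'etale source and target maps makes explicit a point the paper leaves implicit, but the argument is the same.
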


\begin{proof}
The same proof given in \cite[Section 4]{borne-vistoli} applies. We briefly sketch it below.

We can assume that there is a global Kato chart $X\to \bA(P)$. Because this map is strict, the diagram
$$
\xymatrix{
\radice[n]{X}\ar[r]\ar[d] & \radice[n]{\bA(P)}\ar[d]\\
X\ar[r] & \bA(P)
}
$$
is cartesian, so it suffices to show that $\radice[n]{\bA(P)}$ is an analytic DM stack.

From the functorial definition and Lemma \ref{lemma:DFchart} it is clear that the root stack of the stack $\cA(P)$ is the stack $\cA(\frac{1}{n}P)$, with the natural map $\cA(\frac{1}{n}P)\to \cA(P)$. Note that since $\frac{1}{n}P\cong P$ as monoids, we actually have $\cA(\frac{1}{n}P)\cong \cA(P)$ as stacks with a log structure, but the map mentioned above is not the identity.

Moreover, the diagram
$$
\xymatrix{
\radice[n]{\bA(P)}\ar[r]\ar[d]& \cA(\frac{1}{n}P)\ar[d]\\
\bA(P) \ar[r] &\cA(P)
}
$$
is cartesian, and shows that, as in the algebraic case, we have an isomorphism
$$
\textstyle \radice[n]{\bA(P)}=\left[\bA(\frac{1}{n}P)/\mu_n(P)\right],
$$
where $\mu_n(P)$ is the Cartier dual (i.e. the group of characters) of the cokernel of $P^\gp\to \frac{1}{n}P^\gp$.
\end{proof}

Note that this also gives a quotient stack presentation
$$
\radice[n]{X}\cong [X_n/(\mu_n(P))_\an]
$$
in presence of a Kato chart $X\to \bA(P)$, where $X_n=X \times_{\bA(P)}\bA(\frac{1}{n}P)$. The infinite root stack, as in the algebraic case, is only pro-Deligne--Mumford.

\begin{remark}\label{rmk:too-small}
There is a difference in the analytic case, regarding the infinite root stack $\radice[\infty]{X}$, that is worth a few words. In the analytic case, this stack has very few objects. This is due to the fact that analytic spaces are by definition locally of finite type, and thus they cannot have roots of every order of a non-zero non-constant holomorphic function.

Let us consider for example $X=\bC$, with coordinate $z$, as a log analytic space with the log structure given by the origin, and its infinite root stack $\radice[\infty]{X}$. If $f\colon Y\to \bC$ is a non-constant map from an analytic space $Y$ that hits the origin, then the pullback to $Y$ of the function $z$ is a non-constant analytic function, and there cannot exist a sequence of analytic functions $z_n$ on $Y$  such that $z_n^n=f^*z$ for all $n$: the local ring $\cO_{Y,y}$ in a point $y\in Y$ that maps to the origin is local Noetherian, and if there existed roots as above, then $f^*z\in \mathfrak{m}_y^n$ for every $n$, hence we would have $f^*z=0$, as $\bigcap_{n}  \mathfrak{m}_y^n =\{0\}$.

In this case $\radice[\infty]{X}$ is isomorphic to the disjoint union $\bC^\times \bigsqcup \radice[\infty]{0}$, where the origin $0\in \bC$ is given the induced log structure. Because of this, in the analytic setting it is best to see the infinite root stack as a pro-object instead than an actual stack.
\end{remark}

\subsection{Comparison of root stacks}\label{sec:3}

Let us compare the different notions of log structures and root stacks that we just defined, using the analytification and ``underlying topological space'' functors.

If $X$ is a fine saturated log algebraic space locally of finite type over $\bC$, then by applying the analytification functor we obtain an analytic space $X_\an$, and an induced analytic log structure. This is obtained by pulling back via the natural morphism of ringed topoi $\phi\colon X_\an\to X_\et$, where $X_\et$ is the small \'etale topos of $X$.

Concretely, given $\alpha\colon M\to \cO_X$ on $X_\et$, we obtain a sheaf $\phi^{-1}M$ on the analytic site of $X_\an$, and an induced symmetric monoidal functor $\phi^{-1}\alpha\colon \phi^{-1}M\to \cO_{X_\an}$. This is not a log structure because $\cO_{X_\an}^\times$ is bigger than $\phi^{-1}\cO_X^\times$, but we can take the associated log structure
$$
\alpha_\an\colon M_\an= \phi^{-1}M\oplus_{(\phi^{-1}\alpha)^{-1}\cO_{X_\an}^\times}\cO_{X_\an}^\times\to \cO_{X_\an}
$$
(note that the sheaf $\overline{M}$ does not change, i.e. $\overline{M_\an}=\phi^{-1}\overline{M}$). Hence we can analytify a fine saturated log scheme locally of finite type to obtain a fine saturated log analytic space.

In the same way, starting from a fine saturated log analytic space and applying the ``underlying topological space'' functor, we obtain a fine saturated log topological space. It is clear that both of these operations preserve the existence of local charts, and the sheaf $\overline{M}$. Consequently, properties of the log struxcture such as being finitely generated, integral, saturated or coherent are also preserved.

We prove now that the three versions of the root stack construction (algebraic, analytic and topological) are compatible with the analytification and ``underlying topological space'' functors.

\begin{proposition}\label{prop:root.comparison}
Let $X$ be a fine saturated  log algebraic space locally of finite type over $\bC$ (resp. analytic space), and $n\in\bN$ be a positive integer.

Then the analytic (resp. topological) stack $\radice[n]{X}_\an$ (resp. $\radice[n]{X}_\top$)  associated with the $n$-th root stack of $X$ is canonically isomorphic to the $n$-th root stack $\radice[n]{X_\an}$ (resp.  $\radice[n]{X_\top}$) of the associated log analytic space (resp. log topological space) of $X$.
\end{proposition}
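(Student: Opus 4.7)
The plan is to reduce the comparison to a local computation using Kato charts, invoke the explicit quotient-stack presentation of root stacks, and then use that the functors $(-)_\an$ and $(-)_\top$ preserve finite limits and commute with quotients by groupoid presentations.

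First I would observe that the formation of root stacks on both sides of the claim is compatible with strict base change (mentioned after the definition of root stacks), and that the functors $(-)_\an$ and $(-)_\top$ are local on the source. Hence, after choosing an \'etale cover (respectively an open cover, in the analytic case) of $X$ by pieces admitting Kato charts, one may assume the existence of a global Kato chart $X\to \bA(P)$ for some fine saturated sharp monoid $P$. This chart remains strict after analytification (resp. after passing to underlying topological space), since strictness only involves the sheaf $\overline{M}$, which is preserved by both functors.

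In the presence of such a chart, the proof of Proposition \ref{prop:rootDM} provides the explicit quotient presentation
$$
\radice[n]{X}\cong [X_n/\mu_n(P)],\qquad X_n=X\times_{\bA(P)}\bA(\tfrac{1}{n}P).
$$
As recalled in Remark \ref{rmk:analytification.functor}, analytification of a stack is computed by analytifying a presenting groupoid, and the analogous statement is true for underlying topological stacks \cite{No1}. Moreover both functors preserve finite limits. Applying them to the above presentation therefore yields
$$
\radice[n]{X}_\an \cong \bigl[X_\an\times_{\bA(P)_\an}\bA(\tfrac{1}{n}P)_\an\,\big/\,\mu_n(P)_\an\bigr],
$$
and an analogous formula in the topological case. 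On the other side, applying the same proof of Proposition \ref{prop:rootDM} to the (strict) analytified chart $X_\an\to \bA(P)_\an$ in the analytic category (and likewise for the topological category, using Lemma \ref{lemma:DFchart}) exhibits $\radice[n]{X_\an}$ (resp. $\radice[n]{X_\top}$) as the \emph{same} quotient stack.

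The main subtlety I expect is the verification that these locally constructed isomorphisms are canonical, so that they glue to a global isomorphism independent of the chart. The cleanest way to handle this is to phrase the comparison functorially: one checks that for an analytic (resp. topological) test space $Y$, an object of $\radice[n]{X}_\an(Y)$ in the sense of the analytification functor is the same datum as a morphism $Y\to X_\an$ together with a lift of the analytified DF structure along $A\hookrightarrow \frac{1}{n}A$ (which is preserved since $\overline{M}$ is preserved by $(-)_\an$). This matches the defining groupoid of $\radice[n]{X_\an}(Y)$, providing a canonical natural equivalence of functors whose restriction to charts recovers the concrete identification above; the topological case is identical.
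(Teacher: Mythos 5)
Your local computation is exactly the paper's: reduce to a global Kato chart $X\to \bA(P)$ using strictness, use the presentation $\radice[n]{X}\cong[X_n/\mu_n(P)]$, apply $(-)_\an$ (resp.\ $(-)_\top$) to the presenting groupoid, and observe that the analytic (resp.\ topological) root stack of $X_\an$ admits the same quotient presentation. That part is correct and is precisely how the paper verifies the isomorphism locally.

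The gap is in how you propose to globalize. You correctly identify canonicity/gluing as the main subtlety, but your proposed fix --- ``one checks that an object of $\radice[n]{X}_\an(Y)$ in the sense of the analytification functor is the same datum as a morphism $Y\to X_\an$ together with a lift of the analytified DF structure'' --- is essentially a restatement of the proposition, not a proof of it. The analytification $\radice[n]{X}_\an$ is defined by a left Kan extension (equivalently, by analytifying a presenting groupoid), so its universal property describes maps \emph{out of} it; its groupoid of $Y$-points for an analytic space $Y$ has no a priori moduli description, and extracting one requires unwinding descent data along the presentation --- which is exactly what is to be shown. The paper supplies the missing mechanism: it first builds a canonical global morphism $\radice[n]{X}_\an=\varinjlim_{\Spec R\to\radice[n]{X}}(\Spec R)_\an\to\radice[n]{X_\an}$ by assigning to each $\Spec R\to\radice[n]{X}$ (i.e.\ a map $\phi\colon\Spec R\to X$ plus a lift $N\colon\phi^{-1}\frac{1}{n}A\to\Div_R$) the analytified lift obtained by analytifying the line bundles with section $N(b)=(L_b,s_b)$ section by section, checking compatibility with triangles over $\radice[n]{X}$, and invoking the universal property of the colimit; only then is the chart computation used to see that this globally defined map is an isomorphism. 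Without constructing such a natural transformation (or, alternatively, without explicitly verifying that your chart-wise isomorphisms are compatible with refinements and transition data), the local identifications do not glue, so you should add this construction to complete the argument.
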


In short, $\radice[n]{X}_\an\cong \radice[n]{X_\an}$ and  $\radice[n]{X}_\top\cong \radice[n]{X_\top}$. This will be used to describe functorially the topological infinite root stack $\radice[\infty]{X}_\top=\varprojlim_n \radice[n]{X}_\top$ of a log analytic space (or log algebraic space) $X$.

\begin{proof}
The proof will be entirely analogous in the two cases, so we will carry it out only in the analytic case.

The general construction of $(\cX)_\an$, if $\cX$ is any stack over schemes, is as a left Kan extension of $(-)_\an$ along the Yoneda embedding, {as recalled at the end of Remark \ref{rmk:analytification.functor}}. In other words we have the formula
$$
(\cX)_\an=\varinjlim_{\Spec R\to \cX} (\Spec R)_\an
$$
where the colimit is a lax colimit in the 2-category of analytic stacks.

Now assume we are given a map $f\colon \Spec R\to \radice[n]{X}$, and let us explain how to produce a map $g\colon (\Spec R)_\an\to \radice[n]{X_\an}$. By the functorial description of {the stack $\radice[n]{X}$}, the map $f$ corresponds to a morphism $\phi\colon \Spec R\to X$ and a lifting of $\phi^{-1}L\colon \phi^{-1}A\to\Div_R$ to $N\colon \phi^{-1}\frac{1}{n}A\to \Div_R$, and analogously for $\radice[n]{X_\an}$ and the map $g$ on the analytic side. Hence, to obtain a map $g$ as above we need to produce a lifting $N_\an\colon \phi^{-1}\frac{1}{n}A_\an\to \Div_{(\Spec R)_\an}$ of the functor $\phi^{-1}L_\an\colon \phi^{-1}A_\an\to\Div_{(\Spec R)_\an}$.

Let $a$ be a section of $\phi^{-1}\frac{1}{n}A_\an$ over some analytic open $U\subseteq (\Spec R)_\an$. Then we can find an \'etale $V\to \Spec R$ with a section $\sigma\colon U\to V_\an$ that is a homeomorphism onto the image, and a section $b$ of $\phi^{-1}\frac{1}{n}A(V)$ that corresponds to $a$. The section $b$ gives $N(b)=(L_b,s_b)$, a line bundle over $V$ with a global section $s$. By analytifying, we get a complex line bundle $(L_b)_\an$ with a global holomorphic section $(s_b)_\an$. By restricting to $U$, this defines the image of $a$ in $\Div_{(\Spec R)_\an}(U)$. This process extends in the obvious way to a symmetric monoidal functor of monoidal stacks over the analytic site $\cA_{(\Spec R)_\an}$ that lifts $\phi^{-1}L_\an$, i.e. a morphism $(\Spec R)_\an\to \radice[n]{X_\an}$.

{From this procedure we obtain a morphism of analytic stacks
$$
\varinjlim_{\Spec R\to \radice[n]{X}} (\Spec R)_\an=\radice[n]{X}_\an \to \radice[n]{X_\an}
$$
as follows. For every $\bC$-algebra $R$ of finite type, every morphism $\Spec R\to \radice[n]{X}$ induces a morphism $(\Spec R)_\an\to \radice[n]{X_\an}$ as explained above. Moreover this assignment is compatible with commuting triangles
$$
\xymatrix{
\Spec R\ar[r]\ar[d] & \radice[n]{X}\\
\Spec R'\ar[ru]& }
$$
and therefore, by the universal property of the colimit, we obtain the desired morphism $\radice[n]{X}_\an\to \radice[n]{X_\an}$.}

To check that this map is an isomorphism, we can do so locally on $X$, where there is a Kato chart $X\to \Spec \bC[P]$ for a fine torsion-free monoid $P$. In that case we have a quotient stack description of $\radice[n]{X}$ as
$$
\radice[n]{X}=[X_n/\mu_n(P)]
$$
where $X_n=X\times_{\Spec \bC[P]}\Spec \bC[\frac{1}{n}P]$ and $\mu_n(P)$ is the Cartier dual of the cokernel of $P^\gp\to \frac{1}{n}P^\gp$, that acts on $X_n$ by acting on the second factor. From the construction of $(-)_\an$ via presenting groupoids of \cite[Theorem 20.1]{No1}, recalled in {Remark {\ref{rmk:analytification.functor}}}, it follows that $\radice[n]{X}_\an=[(X_n)_\an/\mu_n(P)_\an]$.

From the analytic stack description of $\radice[n]{X_\an}$ as a quotient in the presence of a global chart given in Section {\ref{sec:root.stacks}}, we see that it coincides with the one just described. The map $\radice[n]{X}_\an\to \radice[n]{X_\an}$ in this local case is an isomorphism, and this concludes the proof. 
\end{proof}

\section{The Kato-Nakayama space as a ``root stack''}\label{sec:4}

Let $X$ be a fine saturated log analytic space. In this section we give a functorial description of the Kato-Nakayama space $X_\log$ (see \cite{KN} or the Appendix of \cite{knvsroot}) of $X$ in the language of DF structures, and that bears a close similarity to the description of root stacks. It presents the Kato-Nakayama space as a sort of ``transcendental'' root stack.

As a byproduct of this alternative description we obtain a global construction of the canonical morphism $\Phi_X\colon X_\log\to (\radice[\infty]{X})_\top$ of \cite{knvsroot} (Section \ref{subsec:morphism} and Proposition \ref{prop:morphism} below).

Let us start by briefly recalling how $X_\log$ is constructed \cite[Section 1]{KN}. Let us denote by $p^\dagger$ the log analytic space whose underlying space is $\Spec\bC$, and the log structure is defined by the monoid $\bR_{\geq 0}\times S^1$ and the map $\alpha\colon \bR_{\geq 0}\times S^1\to \bC=\cO_{\Spec\bC}$ given by $(r,a)\mapsto r\cdot a$. One defines $X_\log$ as the set of morphisms of log analytic spaces $\Hom(p^\dagger,X)$. Equivalently, elements of $X_\log$ are pairs $(x,\phi)$ consisting of a point $x\in X$ and a homomorphism of groups $\phi\colon M_x^\gp\to S^1$ such that $\phi(f)=\frac{f(x)}{|f(x)|}$ for every $f\in \cO_{X,x}^\times\subseteq M_x^\gp $.

If $X=\bC(P)=(\Spec\bC[P])_\an$, then $X_\log$ can be naturally identified with $\Hom(P,\bR_{\geq 0}\times S^1)$. More generally, if $X$ has a Kato chart $X\to \bC(P)$, then $X_\log$ can be identified with a closed subset of the space $X\times \Hom(P^\gp,S^1)$ (where $\Hom(P^\gp,S^1)$ has its natural topology), and we can equip it with the induced topology. This turns out to be independent of the particular Kato chart that we choose, so we get a topology on the space $X_\log$ for a general $X$. 

The resulting map $\tau\colon X_\log\to X$ that sends $(x,\phi)$ to $x$ is continuous and proper. The fiber $\tau^{-1}(x)$ over a point $x\in X$ can be identified with the space $\Hom(\overline{M}_x^\gp,S^1)$, which is non-canonically isomorphic to a real torus $(S^1)^r$, where $r$ is the rank of the (finitely generated) free abelian group $\overline{M}_x^\gp$. If the log structure of $X$ is determined by a normal crossings divisor $D\subseteq X$, then the space $X_\log$ coincides with the ``real oriented blowup'' of $X$ along $D$.

The space $X_\log$ should be thought of as an ``underlying topological space'' of the log analytic space $X$, where the log structure is replaced by the non-trivial topology of the fibers of the map $\tau\colon X_\log\to X$. For example, in \cite[Theorem 0.2]{KN} it is proven that log \'etale and log de Rham cohomology on $X$ can be identified with ``Betti'' (or singular) cohomology on the space $X_\log$.

\subsection{The case of a single divisor}\label{sec:one.divisor}

Let us first look at a motivating example. 

Assume that $X$ is a smooth analytic space with a log structure given by a single smooth divisor. In this case there is a global chart $X\to [\bC/\bC^\times]$ for the log structure, corresponding to the map $\bN\to [\bC/\bC^\times](X)$ that sends $1$ to $(\cO_X(D),1_D)$. Here we are considering $\bC$ and $\bC^\times$ as analytic spaces.

The various root stacks of $X$ can be obtained as fibered products in the following manner (see Section {\ref{sec:root.stacks}}): if $\wedge n\colon  [\bC/\bC^\times]\to [\bC/\bC^\times]$ is the map induced by ``raising to the $n$-th power'' on both the space and the group, we have a cartesian diagram
$$
\xymatrix{
\radice[n]{X}\ar[r]\ar[d] & [\bC/\bC^\times]\ar[d]^{\wedge n}\\
X\ar[r] &  [\bC/\bC^\times].
}
$$
The basic insight is that the Kato-Nakayama space can be obtained in a similar way as well. The idea for what follows is due to Kai Behrend.

Let us consider the ``extended complex plane'' $$\overline{\bC}=(\{-\infty\}\times \bR)\cup \bC=(\{-\infty\}\cup \bR)\times \bR$$
with its operation given by addition (where $-\infty+x=-\infty$ for every $x\in \{-\infty\}\cup \bR$), that makes it a commutative topological monoid. There is an action of the group $\bC^+$ of complex numbers with addition (we use this notation to distinguish it from the analytic space $\bC$) on $\overline{\bC}$ given by translation, i.e. $(a+ib)\cdot (x,y)=(a+ x, b+y)$, and we will consider the quotient stack $[\overline{\bC}/\bC^+]$ as a topological stack. 

The action of $\bC^+$ on $\overline{\bC}$ has two orbits: points $(x,y)$ with $x\in \bR$ have trivial stabilizer and the action is transitive among them, so they give a single open point of $[\overline{\bC}/\bC^+]$. The other orbit is the line $\{-\infty\}\times \bR$, with stabilizer $\bR^+\subseteq \bC^+$. So we can loosely write $[\overline{\bC}/\bC^+]=*\cup \class\bR^+$.

We have a morphism of stacks $\exp\colon [\overline{\bC}/\bC^+]\to [\bC/\bC^\times]$ given by the exponential $\mathrm{exp}\colon \bC^+\to \bC^\times$ at the level of groups, and by the $\mathrm{exp}$-equivariant map $\overline{\bC}\to \bC$ that sends $(x,y)$ to $e^{x+iy}$ (with the convention that $e^{-\infty +iy}=0$), at the level of spaces. This coincides with the universal cover of $\bC^\times$ if we restrict it to the complement of the line $\{-\infty\}\times \bR$, which in turn gets contracted to the origin in $\bC$. Now note that $[\bC/\bC^\times]$ also has two points, namely $[\bC/\bC^\times]=*\cup \class \bC^\times$, and the morphism $[\overline{\bC}/\bC^+]\to [\bC/\bC^\times]$ ``maps'' $*$ to $*$ and $\class \bR^+\to \class \bC^\times$, via $\mathrm{exp}\colon \bR^+\to \bC^\times$. This last homomorphism is injective with cokernel isomorphic to ${S}^1$.

Because of this description, the morphism $[\overline{\bC}/\bC^+]\to [\bC/\bC^\times]$ is an isomorphism over the open point and an $S^1$-bundle over the closed point. Since the map $X\to [\bC/\bC^\times]$ sends $X\setminus D$ to the open point and $D$ to the closed point, it is apparent that by pulling back we will find precisely the Kato-Nakayama space (i.e. the real oriented blow up, in this case), so that there should be (see Section \ref{subsec:kncharts} below for the proof) a cartesian diagram
$$
\xymatrix{
X_\log \ar[r]\ar[d]_\tau &  [\overline{\bC}/\bC^+]\ar[d]^{\exp}\\
X\ar[r] & [\bC/\bC^\times].
}
$$
Moreover note that $[\overline{\bC}/\bC^+]\to [\bC/\bC^\times]$ factors as $[\overline{\bC}/\bC^+]\to  [\bC/\bC^\times] \stackrel{\wedge n}{\longrightarrow} [\bC/\bC^\times]$ for every $n$ by sending $(x,y)\in \overline{\bC}$ to $e^{(x+iy)/n}\in\bC$ and using $\mathrm{exp}\left(\frac{\cdot}{n}\right)\colon \bC^+\to \bC^\times$ on the groups.

This will give a morphism $X_\log\to \radice[n]{X}_\top$ for every $n$ (here we are using Proposition {\ref{prop:root.comparison}}), that all together will give a morphism $X_\log\to \varprojlim_n (\radice[n]{X})_\top=\radice[\infty]{X}_\top$ of topological stacks, in this special case.

\subsection{The general case}\label{subsection:in.general}

Let us use the language of DF structures to generalize the above construction. 

The log structure of $X$ is given by a morphism  $A\to \Div_X$ of symmetric monoidal stacks on the analytic site $\cA_X$. 
The $n$-th root stack $\radice[n]{X}$ parametrizes liftings of the log structure to the sheaf of formal fractions $\frac{1}{n}A$, i.e. diagrams
$$
\xymatrix{
A\ar[r]\ar[d] &\Div_X\\
\frac{1}{n}A \ar@{-->}[ru] &
}
$$
(over some analytic space over $X$) or, alternatively, liftings
$$
\xymatrix{
A\ar[r]\ar@{-->}[rd] &\Div_X\\
 &\Div_X\ar[u]_{\wedge n}
}
$$
where $\wedge n\colon \Div_X \to \Div_X$ sends $(L,s)$ into $(L^{\otimes n}, s^{\otimes n})$.

There is a description of the Kato-Nakayama space in this spirit, that uses the symmetric monoidal stack $[\overline{\bC}/\bC^+]$ introduced in the previous section (which turns out to ``dominate'' every such root morphism $\wedge n$, as we already explained above and will discuss in more detail in Section \ref{subsec:morphism}).

\begin{definition}
Let us consider the stack $X_{\overline{\bC}}$ over the category of \emph{topological spaces} over $X_\top$ that sends a space $\phi\colon T\to X_\top$ to the groupoid of liftings
$$
\xymatrix{
\phi^{-1}A\ar[r]\ar@{-->}[rd] & [\bC/\bC^\times]_T\\
 & [\overline{\bC}/\bC^+]_T \ar[u]_{\exp}
}
$$
where $\phi^{-1}A\to [\overline{\bC}/\bC^+]_T$ is a symmetric monoidal functor. The arrows between the objects are given by the obvious natural transformations.
\end{definition}
Here the map $\phi^{-1}A\to[\bC/\bC^\times]_T$ is the pullback to $T$ of the topological DF structure on $X_\top$ induced by the given analytic DF structure on $X$.

The stack $X_{\overline{\bC}}$ parametrizes liftings of the $\bC^\times$-torsors $(\phi^{-1}L)(a)$ to $\bC^+$-torsors along $\exp\colon \bC^+\to \bC^\times$, equipped with a $\bC^+$-equivariant map to $\overline{\bC}$ that covers the given $\bC^\times$-equivariant map $(\phi^{-1}L)(a)\to \bC$.

\begin{theorem}\label{theorem:comparison}
Let $X$ be a fine saturated log analytic space. The stack $X_{\overline{\bC}}$ is represented by the Kato-Nakayama space $X_\log$, i.e. there is a canonical isomorphism of topological stacks $X_{\overline{\bC}}\cong X_\log$ over $X_\top$.
\end{theorem}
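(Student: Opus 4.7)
The plan is to construct a canonical morphism from the functor represented by $X_\log$ to $X_{\overline{\bC}}$ as stacks over $X_\top$, and then verify it is an equivalence. Equivalently, I will exhibit a tautological object of $X_{\overline{\bC}}(X_\log)$ (a symmetric monoidal functor $\tau^{-1}A \to [\overline{\bC}/\bC^+]_{X_\log}$ lifting $\tau^{-1}L$), and then, by strict base change along a Kato chart, reduce the equivalence check to the motivating computation of Section \ref{sec:one.divisor}.

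For the tautological object, the key input is that on $X_\log$ the unit phase map $f \mapsto f/|f|$ on $\cO_X^\times$ extends canonically to a homomorphism of sheaves of groups from $\tau^{-1}M^\gp$ to the sheaf of continuous $S^1$-valued functions, explicitly given by $(x,\psi)\cdot m \mapsto \psi(m_x)$; continuity follows from the description of $X_\log$ as a subspace of $X\times\Hom(P^\gp,S^1)$ in the presence of a chart. Given a local section $a$ of $A$ on an open $U\subseteq X$, lift $a$ locally to a section $\tilde a$ of $M$ and set $f_{\tilde a}=\alpha(\tilde a)\in\cO_X(U)$. A direct construction then produces a continuous map $\tau^{-1}(U)\to \overline{\bC}$, sending $(x,\psi)$ to the point whose first coordinate is $\log|f_{\tilde a}(x)|$ (with the convention that this is $-\infty$ when $f_{\tilde a}(x)=0$) and whose second coordinate is a local lift to $\bR$ of the $S^1$-valued function $\psi(\tilde a)$. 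The $2\pi i\bZ$-indeterminacy in the second coordinate, together with the $\bC^+$-translation ambiguity coming from changing $\tilde a$ (by the complex logarithm of the relevant unit), packages into a well-defined $\bC^+$-torsor over $\tau^{-1}(U)$ equipped with a $\bC^+$-equivariant map to $\overline{\bC}$; this is the required lift of $(L_a,s_a)$. Naturality in $a$ and compatibility with the monoidal structure are immediate, and reducing modulo $\exp\colon \bC^+\to\bC^\times$ recovers $\tau^{-1}L$.

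To verify the resulting morphism is an equivalence, I use that both sides are compatible with strict base change, reducing to the case where $X$ admits a global Kato chart $X\to \bC(P)$ for a fine sharp saturated monoid $P$. By the analogue of Lemma \ref{lemma:DFchart} for $[\overline{\bC}/\bC^+]$, an object of $X_{\overline{\bC}}(T)$ for $T\to X_\top$ amounts to a symmetric monoidal functor $P\to [\overline{\bC}/\bC^+](T)$ lifting the given functor into $[\bC/\bC^\times]$. Writing $P$ as a coequalizer $\bN^N\rightrightarrows \bN^r\to P$ and invoking the monoidality of $\exp$ together with the functor-of-points description, the question reduces to the case $P=\bN$. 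In that case, Section \ref{sec:one.divisor} already exhibits $X_\log$ as the fiber product $X\times_{[\bC/\bC^\times]}[\overline{\bC}/\bC^+]$ (namely, the real oriented blow-up along the smooth divisor), which is precisely what $X_{\overline{\bC}}$ computes in this situation.

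The most delicate point is this last reduction: one has to match the topology of $X_\log$ as a subspace of $X\times\Hom(P^\gp,S^1)$ with the one inherited from the limit of topological stacks obtained by applying the $P=\bN$ case to the generators and imposing the relations. This should follow from the explicit presentation and continuity of $\exp$, but deserves a careful check; I expect it to be the main technical step of the argument.
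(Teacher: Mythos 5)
Your overall strategy---construct a tautological object of $X_{\overline{\bC}}(X_\log)$ by hand and then check the induced map is an equivalence locally on charts---is genuinely different from the paper's, which never builds a universal object: it instead imports the functorial characterization of $X_\log$ from \cite{illusie-kato-nakayama} (Theorem \ref{thm.functorial}, representing $T\mapsto\Hom(\phi^{-1}M^\gp,S^1_T)$ with the unit-phase condition) and proves the theorem as a chain of translations, morphisms $M^\gp\to S^1$ $\leftrightarrow$ morphisms $M\to\bRps$ $\leftrightarrow$ monoidal functors $\overline{M}\to[\bRps/\bC^\times]$ $\leftrightarrow$ monoidal functors to $[\overline{\bC}/\bC^+]$, using the isomorphism $[\overline{\bC}/\bC^+]\cong[\bRps/\bC^\times]$ obtained by quotienting by $2\pi i\bZ\subseteq\bC^+$. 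Your torsor of pairs $(\tilde a,\theta)$ is essentially the pullback of the paper's correspondence along that isomorphism, so the constructive half of your argument is sound.

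The gap is in the verification half, and it sits exactly where the paper's citation of Theorem \ref{thm.functorial} does the real work. You reduce to a global Kato chart and then appeal to Section \ref{sec:one.divisor} as having ``already exhibited'' $X_\log$ as $X\times_{[\bC/\bC^\times]}[\overline{\bC}/\bC^+]$; but that section is explicitly heuristic --- the cartesian square there is stated as something that ``should be'' true, with the proof deferred to Section \ref{subsec:kncharts}, and Proposition \ref{prop:kncharts} is in turn deduced \emph{from} Theorem \ref{theorem:comparison}. So as written your argument is circular relative to the paper's logical order. To repair it you must prove directly, for a chart monoid $P$, that $X\times_{\bC(P)}(\bRps)(P)$ (equivalently $X_\top\times_{[\bC(P)/\bC^\times(P)]}[\overline{\bC}(P)/\bC^+(P)]$) is homeomorphic over $X_\top$ to $X_\log$ with the Kato--Nakayama topology, i.e.\ with the subspace topology from $X\times\Hom(P^\gp,S^1)$; and the reduction to $P=\bN$ via the coequalizer presentation needs the equalizer property of $[\overline{\bC}(P)/\bC^+(P)]\to[\overline{\bC}(\bN^r)/\bC^+(\bN^r)]\rightrightarrows[\overline{\bC}(\bN^N)/\bC^+(\bN^N)]$ in topological stacks, for which there is no algebraic model to analytify (the paper's Lemma \ref{lemma:DFchart} argument does not transfer verbatim; one needs the analogue cited as \cite[Proposition 3.10]{TVlogstr}). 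You flag the topology-matching as ``deserving a careful check,'' but it is not a routine check to be deferred: it is the content of the theorem, namely that the set-theoretic bijection of liftings is a homeomorphism for the topology Kato and Nakayama put on $X_\log$. Either carry out that comparison explicitly in the chart case, or do as the paper does and quote the already-established representability statement of \cite[(1.2)]{illusie-kato-nakayama}, after which only the (purely formal) dictionary between $\Hom(\phi^{-1}M^\gp,S^1_T)$ and liftings to $[\overline{\bC}/\bC^+]_T$ remains.
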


The starting point of the proof is the following functorial characterization of $X_\log$.

\begin{theorem}[{\cite[(1.2)]{illusie-kato-nakayama}}]\label{thm.functorial}
Consider the functor $F_\log$ that sends a topological space $\phi\colon T\to X_\top$ to the set of morphisms of sheaves of abelian groups $c\colon \phi^{-1}M^\gp\to S^1_T$ such that $c(\phi^{-1}f)=f/|f|$ for $f\in \cO_X^\times$, and that acts in the obvious way on the arrows.

Then $F_\log$ is represented by the Kato-Nakayama space $X_\log$.
\end{theorem}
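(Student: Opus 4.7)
The plan is to apply Theorem \ref{thm.functorial}: once a canonical equivalence of stacks $\Psi\colon X_{\overline{\bC}} \to F_\log$ over $X_\top$ is produced, $X_{\overline{\bC}}$ will automatically be represented by $X_\log$. I will carry this out in three steps: rigidify (check $X_{\overline{\bC}}$ is a sheaf of sets), construct the natural morphism $\Psi$, and verify $\Psi$ is an isomorphism by a local computation using Kato charts.

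First I would establish rigidity. An automorphism of a lift $N\colon \phi^{-1}A \to [\overline{\bC}/\bC^+]_T$ assigns, for each local section $a$, an automorphism of the object $N(a) = (\tilde P_a, f_a)$ of $[\overline{\bC}/\bC^+]_T$ whose image under $\exp$ is the identity of $\phi^{-1}L(a) \in [\bC/\bC^\times]_T$. Such automorphisms lie in $\ker(\exp\colon \bC^+ \to \bC^\times) = 2\pi i \bZ$ and simultaneously in the stabilizer of $f_a$ under the $\bC^+$-action on $\overline{\bC}$; this stabilizer is $\bR$ on the closed orbit $\{-\infty\}\times\bR$ and trivial elsewhere. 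Since $2\pi i \bZ \cap \bR = \{0\}$, every such automorphism is trivial, so $X_{\overline{\bC}}$ is a sheaf of sets.

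Next I would construct $\Psi$. Given $N$ and a local section $m$ of $\phi^{-1}M^\gp$ with image $a$ in $\phi^{-1}A^\gp$, $m$ records a local trivialization $\tau$ of the line bundle $L_a$, i.e.\ a section of the $\bC^\times$-torsor $L_a^\times = \tilde P_a/(2\pi i \bZ)$. Shrinking, I lift $\tau$ along the $2\pi i \bZ$-cover $\tilde P_a \to L_a^\times$ to a section $\tilde\tau$, and writing $f_a\circ\tilde\tau = (x,y)\in(\{-\infty\}\cup\bR)\times\bR$, I set $c(m) := e^{iy}\in S^1_T$. Changing $\tilde\tau$ by an element of $2\pi i \bZ$ shifts $y$ by $2\pi\bZ$ and leaves $c(m)$ invariant, so $c$ is globally well-defined. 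Monoidality of $N$ makes $c$ a group homomorphism $\phi^{-1}M^\gp \to S^1_T$, and the identity $\exp(f_a\circ\tilde\tau) = \alpha(m)$ specialises to $c(f) = f/|f|$ when $\alpha(m) = f \in \cO_X^\times$, confirming that $c$ defines an element of $F_\log(T)$.

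Finally I would show $\Psi$ is an isomorphism. Since both $X_{\overline{\bC}}$ and $F_\log$ are sheaves compatible with strict base change, I may reduce to the case $X = \bA(P)$ for a fine saturated sharp monoid $P$. There $X_\log \cong \Hom_\mon(P, \bR_{\geq 0}\times S^1)$ by the discussion preceding the theorem. On the other side, since $\bC^+\cong\bR^2$ is contractible every $\bC^+$-torsor on a paracompact $T$ is trivial, so after choosing trivializations $X_{\overline{\bC}}(T)$ is computed as the set of monoid homomorphisms $s\colon P \to \mathrm{Maps}(T,\overline{\bC})$ with $\exp\circ s = \phi^\#$, modulo the translation action of $\Hom(P^\gp, \mathrm{Maps}(T, 2\pi i \bZ))$ (the $\bC^+$-valued shifts preserving the pushforward). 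Via the homeomorphism $(\{-\infty\}\cup\bR, +)\cong(\bR_{\geq 0}, \cdot)$ in the first factor and $\bR/2\pi\bZ \cong S^1$ in the second, this quotient identifies with continuous maps $T \to \Hom_\mon(P, \bR_{\geq 0}\times S^1)$ over $\phi$, matching $X_\log$. The hard part will be this last identification: carefully bookkeeping the 2-categorical coherence data (the isomorphisms $N(p)\otimes N(q)\cong N(p+q)$) and verifying that the combined action of $\bC^+$-valued trivialization shifts and $2\pi i \bZ$-valued iso shifts collapses correctly to the desired $\bR_{\geq 0}\times S^1$-valued description is the main technical obstacle.
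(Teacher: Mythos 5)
There is a fundamental mismatch between what you prove and what you were asked to prove. The statement in question is Theorem \ref{thm.functorial} itself --- the assertion that the functor $F_\log$ of homomorphisms $c\colon \phi^{-1}M^\gp\to S^1_T$ with $c(\phi^{-1}f)=f/|f|$ is represented by $X_\log$. Your opening sentence announces that you will \emph{apply} Theorem \ref{thm.functorial}: you assume that $F_\log$ is represented by $X_\log$ and use this to deduce that the lifting stack $X_{\overline{\bC}}$ is represented by $X_\log$. That is an argument for Theorem \ref{theorem:comparison} (the paper's main comparison result), not for Theorem \ref{thm.functorial}; as a proof of the latter it is circular and establishes nothing. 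For what it is worth, the paper itself does not prove Theorem \ref{thm.functorial} --- it is quoted from Illusie--Kato--Nakayama --- and the paper's proof of Theorem \ref{theorem:comparison} proceeds by exactly the kind of translation you sketch (rewriting $c$ as a monoid map $d\colon\phi^{-1}M\to(\bRps)_T$, quotienting by $\cO_X^\times$ to get a symmetric monoidal functor to $[\bRps/\bC^\times]_T$, and identifying $[\overline{\bC}/\bC^+]\cong[\bRps/\bC^\times]$). So your three steps are a plausible outline for \emph{that} theorem, but they do not address the one at hand.

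A genuine proof of Theorem \ref{thm.functorial} would have to compare the functor $F_\log$ directly with the explicit construction of $X_\log$: its underlying set consists of pairs $(x,\psi)$ with $\psi\colon M_x^\gp\to S^1$ satisfying $\psi(f)=f(x)/|f(x)|$ for $f\in\cO_{X,x}^\times$, and its topology is defined, in the presence of a Kato chart $X\to\bA(P)$, by the embedding of $X_\log$ into $X\times\Hom(P^\gp,S^1)$. One must check that a continuous map $T\to X_\log$ over $X_\top$ is the same thing as a morphism of sheaves of groups $\phi^{-1}M^\gp\to S^1_T$ with the stated compatibility, the substantive points being that continuity of the classifying map into $\Hom(P^\gp,S^1)$ matches continuity of the sections $c(m)$, that the gluing over a cover by charts is consistent, and that the identification is independent of the chosen chart. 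None of this appears in your write-up; the closest you come is the local computation in your third step, but that computation concerns liftings to $[\overline{\bC}/\bC^+]$ rather than the functor $F_\log$, and it again presupposes the description of $X_\log$ as $\Hom_\mon(P,\bRps)$ that the theorem is meant to justify functorially.
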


\begin{proof}[Proof of Theorem \ref{theorem:comparison}]
Let us describe concretely how the analytic log structure $\alpha\colon M\to \cO_X$ on $X$ induces a topological log structure on $X_\top$. We take the composite $\beta\colon M\to \bC_{X_\top}$ of $\alpha$ and the natural map $\cO_X\to \bC_{X_\top}$ (here we are denoting by $\bC_T$ the sheaf of continuous complex-valued functions on the topological space $T$), and then form the amalgamated sum ${M}_\top=M\oplus_{\beta^{-1}(\bC^\times_{X_\top})}\bC^\times_{X_\top}$. The induced map $\alpha_\top \colon M_\top \to \bC_{X_\top}$ gives the topological log structure. Note that we have an isomorphism between the characteristic shaves $\overline{M}\cong \overline{M_\top}$, induced by $M\to M_\top$.

We will rephrase the functorial interpretation of Theorem \ref{thm.functorial} in the language of DF structures. First note that since $S^1$ is a group, we have $\Hom(\phi^{-1}M^\gp,S^1_T)=\Hom(\phi^{-1}M,S^1_T)$ and this is compatible with the condition on sections of $\cO_X^\times$.

We claim that the set of homomorphisms $$\left\{c\in \Hom(\phi^{-1}M,S^1_T)\;\; \Big| \;\; c(\phi^{-1}f)=\frac{f}{|f|} \mbox{ for } f\in \cO_X^\times\right\}$$ is the same as
the set of morphisms of sheaves of monoids $d\colon \phi^{-1}M\to (\bR_{\geq 0}\times S^1)_T$ such that 
$d(\phi^{-1}f)=(|f|,f/|f|)$ for $f \in \cO_X^\times$ and the composite $\phi^{-1}M\to (\bRps)_T\to (\bRp)_T$ is {the homomorphism sending a section $m\in \phi^{-1}M$ to the continuous function $|{\alpha_\top}(m)|$ with values in $\bR_{\geq 0}$} (where $|\cdot|$ denotes the usual euclidean absolute value on $\bC$).

Given such a $d$, we can compose with the second projection $(\bR_{\geq 0}\times S^1)_T\to S^1_T$ and obtain a $c\in  \Hom(\phi^{-1}M,S^1_T)$ satisfying the condition above. In the other direction, given $c:\phi^{-1}M\to S^1_T$, one can define the corresponding $d$ via $d(m)=\left(|{\alpha_\top}(m)|,c(m)\right)$.

Now we claim that morphisms $d\colon\phi^{-1}M\to (\bR_{\geq 0}\times S^1)_T$ as above correspond to symmetric monoidal functors $$\overline{d}\colon \phi^{-1}\overline{M} \to [\bR_{\geq 0}\times S^1/\bC^\times]_T$$
that lift the DF structure $L_\top\colon \phi^{-1}\overline{M}\to [\bC/\bC^\times]_T$ associated with $\alpha_\top$. Here the action of $\bC^\times\cong \bR_{> 0}\times S^1$ on $\bRps$ is given by multiplication on the two factors and $[\bR_{\geq 0}\times S^1/\bC^\times]_T\to  [\bC/\bC^\times]_T$ is induced by the $\bC^\times$-equivariant function $\bR_{\geq 0}\times S^1\to \bC$ sending $(r,a)$ to $r\cdot a\in \bC$. 

First observe that, by construction of the sheaf $M_\top$ and the log structure $\alpha_\top$, there is a bijection between maps $d\colon \phi^{-1}M\to (\bRps)_T$ such that $d(\phi^{-1}f)=(|f|, f/|f|)$ for every $f\in \cO_X^\times$ and maps $\widetilde{d}\colon \phi^{-1}M_\top\to (\bRps)_T$ such that $\widetilde{d}(f)=(|f|, f/|f|)$ for every $f\in \bC_T^\times$.

Now note that the group $\bC_T^\times$ acts on both $\phi^{-1}M_\top$ and $(\bRps)_T$, and moreover the action on $\phi^{-1}M_\top$ is free, with quotient $\phi^{-1}\overline{M}$. By taking the stacky quotient of $\widetilde{d}$ by this action we get a symmetric monoidal functor
$$
\overline{d}\colon \phi^{-1}\overline{M} \to [\bR_{\geq 0}\times S^1/\bC^\times]_T.
$$
Observe also that the composite $\phi^{-1}\overline{M} \stackrel{\overline{d}}{\longrightarrow} [\bR_{\geq 0}\times S^1/\bC^\times]_T\to [\bC/\bC^\times]_T$ is naturally identified with $\phi^{-1}L_\top$, where $L_\top$ is the DF structure associated with $\alpha_\top$.

The inverse construction is obtained by taking the base change of such a $\overline{d}$ along the projection $(\bRps)_T\to [\bR_{\geq 0}\times S^1/\bC^\times]_T$, which is a $\bC^\times_T$-torsor.

Finally we note that there is an isomorphism of symmetric monoidal stacks $$[\overline{\bC}/\bC^+]\cong [\bRps/\bC^\times],$$
where the action on the left is the same as in Section \ref{sec:one.divisor}. The subgroup $j\colon \bZ\subseteq \bC^+$ given by $k\mapsto 2k\pi i$ acts without stabilizers on $\overline{\bC}$, and the quotient is $\overline{\bC}/\bZ = \bRps$ (the map $\overline{\bC}\to \bRps$ is $(x,y)\mapsto (e^x,e^{iy})$). Moreover the cokernel of $j$ is $\bC^\times$ (and the map is given by the exponential), and therefore
$$
[\overline{\bC}/\bC^+]\cong [(\overline{\bC}/\bZ)/(\bC^+/\bZ)] \cong [\bRps/\bC^\times]
$$
as symmetric monoidal stacks.

This also gives an isomorphism of symmetric monoidal stacks $[\bR_{\geq 0}\times S^1/\bC^\times]_T\cong [\overline{\bC}/\bC^+]_T$ over the site $\cA_T$, which is compatible with the natural maps to $[\bC/\bC^\times]_T$. This shows that the functorial description of Theorem \ref{thm.functorial} coincides with the one of the stack $X_{\overline{\bC}}$ that we introduced above, and concludes the proof.
\end{proof}

\begin{remark}
With the same reasoning as in the proof, we also have
   \[
   [\bR_{\geq 0}\times S^1/\bC^\times]\cong [\bR_{\geq 0}/\bR_{>0}]
   \]
(by writing $\bC^\times=\bR_{>0}\times S^1$ and cancelling the $S^1$ factor).
\end{remark}

\subsection{Charts}\label{subsec:kncharts}

In the spirit of the functorial interpretation of Theorem \ref{theorem:comparison}, we can obtain ``charts'' for the Kato-Nakayama space $X_\log$ out of charts for the log structure of $X$.

Specifically, when $X$ has a  DF chart $X\to [(\Spec \bC[P])_\an/\widehat{P}_\an]$ with $P$ fine and torsion-free, the cartesian diagram described in Section \ref{sec:one.divisor} can be replaced by the more general
\begin{equation}\label{kn.charts}
\xymatrix{
X_\log\ar[r]\ar[d] & [\overline{\bC}(P)/\bC^+(P)]\ar[d]\\
X_\top\ar[r] & [\bC(P)/\bC^\times(P)]
}
\end{equation}
where ${\overline{\bC}}(P)=\Hom(P,\overline{\bC})$ and $\bC^+(P)=\Hom(P,\bC^+)$ have their natural topologies and monoid or group structures. The vertical map is given by composition with the exponential maps $\overline{\bC}\to \bC$ and $\bC^+\to \bC^\times$ that were discussed in Section \ref{sec:one.divisor}, and for $P=\bN$ the diagram reduces to the one showing up at the end of the discussion.

\begin{remark}
For every finitely generated monoid $P$ there is an isomorphism
   \[
   [\overline{\bC}(P)/\bC^+(P)]\cong [(\bRps)(P)/\bC^\times(P)]
   \]
induced by the projection $\overline{\bC}(P)\to (\bRps)(P)$ and the exponential $\bC^+(P)\to \bC^\times(P)$, as in the proof of Theorem \ref{theorem:comparison}. In an analogous way we also have an isomorphism
   \[
   [(\bRps)(P)/\bC^\times(P)]\cong [\bRp(P)/\bR_{>0}(P)]\,.
   \]

We can use any one of these models to describe charts for $X_\log$, and we will switch back and forth without further mention.
\end{remark}

\begin{proposition}\label{prop:kncharts}
Let $X$ be a fine saturated log analytic space equipped with a DF chart $X\to [(\Spec \bC[P])_\an/\widehat{P}_\an]$, with $P$ fine and torsion-free. Then there is a natural diagram (\ref{kn.charts}) as above, and it is cartesian.
\end{proposition}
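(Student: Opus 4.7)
The plan is to work with functors of points. By Theorem \ref{theorem:comparison}, a $T$-point of $X_\log$ is a pair $(\phi,\overline{d})$ consisting of a morphism $\phi\colon T\to X_\top$ together with a symmetric monoidal lift $\overline{d}\colon \phi^{-1}A\to [\overline{\bC}/\bC^+]_T$ of $\phi^{-1}L$. I will identify this groupoid naturally with the $T$-points of the fiber product on the right-hand side of (\ref{kn.charts}).

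First I would prove the direct topological analogue of Lemma \ref{lemma:DFchart} for the pair $(\overline{\bC},\bC^+)$: the topological stack $[\overline{\bC}(P)/\bC^+(P)]$ represents the functor sending $T$ to the groupoid of symmetric monoidal functors $P\to [\overline{\bC}/\bC^+]_T(T)$. The argument is a verbatim transcription of the one given in Lemma \ref{lemma:DFchart}, replacing $(\bC,\bC^\times)$ by $(\overline{\bC},\bC^+)$ throughout and using that $\bC^+$ is the group of invertible elements of the topological monoid $\overline{\bC}$. Under this description, the right vertical map of (\ref{kn.charts}) is post-composition with $\exp$, and the bottom horizontal map, via the given DF chart, corresponds to the symmetric monoidal functor $P\to A(X)\to [\bC/\bC^\times]_X(X)$. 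Hence a $T$-point of the fiber product amounts to a triple $(\phi,N,\iota)$ where $\phi\colon T\to X_\top$, $N\colon P\to [\overline{\bC}/\bC^+]_T(T)$ is a symmetric monoidal functor, and $\iota$ is a natural isomorphism between $\exp\circ N$ and the pullback along $\phi$ of the chart.

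The desired equivalence then proceeds by restriction along the canonical map $\underline{P}_T\to \phi^{-1}A$ coming from the chart. In one direction, $\overline{d}$ restricts to such an $N$, and the lifting property of $\overline{d}$ produces $\iota$. In the other direction, given $(\phi,N,\iota)$, the functor $N$ sheafifies to a symmetric monoidal map $\underline{P}_T\to [\overline{\bC}/\bC^+]_T$, which I would extend to $\phi^{-1}A$ by invoking the universal property (recalled after Lemma \ref{lemma:DFchart}) that presents $\phi^{-1}A$ as the quotient of $\underline{P}_T$ obtained by killing local sections whose image in $[\bC/\bC^\times]_T$ becomes invertible. Such an extension exists and is unique provided $N$ sends these sections to invertible objects of $[\overline{\bC}/\bC^+]_T$.

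The main obstacle is verifying this automatic invertibility. Concretely, if a local section of $\underline{P}_T$ maps to an invertible object of $[\bC/\bC^\times]_T$, its image under $N$ is a $\bC^+$-torsor together with an equivariant map to $\overline{\bC}$ whose $\exp$-image lands in $\bC^\times\subset \bC$. Since the exponential sends the open orbit $\bC\subset \overline{\bC}$ onto $\bC^\times$ and sends the closed orbit $\{-\infty\}\times \bR$ to $\{0\}$, the equivariant map must already land in the free $\bC^+$-orbit $\bC\subset \overline{\bC}$, and hence the object is itself invertible. Granted this, checking that the two constructions are mutually quasi-inverse and natural in $T$ reduces to routine 2-categorical bookkeeping, which concludes that the diagram (\ref{kn.charts}) is cartesian.
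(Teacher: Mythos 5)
Your proof takes essentially the same route as the paper's, which is itself only a sketch resting on the same two ingredients: the analogue of Lemma \ref{lemma:DFchart} for the stack upstairs, and the extension of the symmetric monoidal functor from $\underline{P}_T$ to $\phi^{-1}A$ via the trivial-kernel property. Your verification that invertibility in $[\bC/\bC^\times]_T$ forces invertibility in $[\overline{\bC}/\bC^+]_T$ --- because $\exp^{-1}(\bC^\times)=\bC^+\subseteq\overline{\bC}$ is exactly the unit group, so the equivariant map must land in the free orbit --- is precisely the detail the paper leaves to the reader, and it is correct. One small caveat: the transcription of Lemma \ref{lemma:DFchart} is not quite ``verbatim,'' since the equalizer step in that proof was deduced from the algebraic statement via exactness of analytification, which has no counterpart for $(\overline{\bC},\bC^+)$; you must instead check directly that $\Hom_\mon(-,\overline{\bC})$ and $\Hom_\mon(-,\bC^+)$ carry the coequalizer presentation $\bN^N\rightrightarrows\bN^r\to P$ to an equalizer of quotient stacks (this is what the paper outsources to \cite[Proposition 3.10]{TVlogstr}), though the reduction-to-$\bN^k$ strategy does go through.
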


\begin{proof}
The point is to show that $[\overline{\bC}(P)/\bC^+(P)] \to [\bC(P)/\bC^\times(P)]$ is the stack of charts for the objects parametrized by the Kato-Nakayama space, as in Theorem \ref{theorem:comparison}.

{Given the isomorphisms
   \[
   [\overline{\bC}/\bC^+]\cong [\bRps/\bC^\times]
   \]
and
   \[
   [\overline{\bC}(P)/\bC^+(P)]\cong [(\bRps)(P)/\bC^\times(P)]\,,
   \]
this claim follows from the following two facts. {
\begin{itemize}
\item For a fine torsion-free monoid $P$ and a topological space $T$, symmetric monoidal functors $P\to [\bRps/\bC^\times](T)$ correspond to morphisms $T\to [(\bRps)(P)/\bC^\times(P)]$ (the analogue of Lemma \ref{lemma:DFchart} - see also \cite[Proposition 3.10]{TVlogstr}).
\item A symmetric monoidal functor $P\to [\bRps/\bC^\times](T)$ can be sheafified to a morphism of symmetric monoidal stacks $A\to [\bRps/\bC^\times]_T$ with trivial kernel (see Propositions 2.4 and 2.10 of \cite{borne-vistoli}), compatibly with the sheafification of the induced functor $P\to [\bRps/\bC^\times](T)\to [\bC/\bC^\times](T)$.
\end{itemize}
We leave the details to the reader.}}
\end{proof}

These kinds of charts are related to the local models for $X_\log$ given by the topological  space $\Hom(P,\bR_{\geq 0}\times S^1)$ (see \cite[Section 1]{KN}) in the same way as DF charts are related to Kato charts for log spaces.

In fact for every fine monoid $P$ the natural diagram
$$
\xymatrix{
(\bRps)(P)\ar[r]\ar[d] & [(\bRps)(P)/\bC^\times(P)]\ar[d]\\
\bC(P)\ar[r] & [\bC(P)/\bC^\times(P)]
}
$$
is cartesian.

Note however that by using the presentation $[\overline{\bC}(P)/\bC^+(P)]$ to compute the fibered product above, we would ``spontaneously'' end up with the diagram
$$
\xymatrix{
[\overline{\bC}(P)/\bZ(P)]\ar[r]\ar[d] & [\overline{\bC}(P)/\bC^+(P)]\ar[d]\\
\bC(P) \ar[r] & [\bC(P)/\bC^\times(P)]
}
$$
{where the group $\bZ(P)=\Hom(P,\bZ)$ is the kernel of the surjective map $\exp(P)\colon \bC^+(P)\to \bC^\times(P)$. Note that to be precise this should be denoted by $2\pi i\bZ(P)$, and thought of as $\Hom(P,2\pi i \bZ)$, but we prefer to keep the notation lighter, and have the coefficient $2\pi i$ in the inclusion $\bZ\to \bC^+$, as $k\mapsto 2\pi i k$.}

Note that of course there is an isomorphism $(\bRps)(P)\cong [\overline{\bC}(P)/\bZ(P)]$, but this {quotient stack presentation} gives some insight into the fact that the space $\overline{\bC}(P)$, that is used by Ogus in \cite{ogus} in the form of $\bH(P)=\Hom(P,\bH)=\Hom(P,\bR_{\geq 0})\times \Hom(P,\bR)$ (see in particular Section 3.1), is like an ``atlas'' for the Kato-Nakayama space in this language. This is further explored in \cite{parabolic}, in relation to a correspondence between certain sheaves of modules on $X_\log$ and parabolic sheaves with real weights.

We can see an analogy with root stacks by looking at the presentations $\radice[n]{X}\cong [X_n/\mu_n(P)]$ given by a Kato chart $X\to \Spec \bC[P]$. Here the atlas is $X_n=X\times_{\Spec\bC[P]}\Spec\bC[\frac{1}{n}P]$, and the group $\mu_n(P)$, which is the kernel of $\bC^\times(P)\to \bC^\times(P)$ induced by $z\mapsto z^n$, is the analogue of the group $\bZ(P)$ (kernel of the exponential) above.

\begin{remark}[Differentiable structure]\label{subsection:differentiable.structure}

The space $X_\log$ has more structure than just that of a topological space, as may be apparent by staring at the charts we just described. The space $\overline{\bC}$ has a smooth (even real analytic) structure (with a boundary), that is respected by the action of $\bC^+$.

In fact, as proven in \cite[Section 6.8]{molcho}, the Kato-Nakayama space is naturally a \emph{differentiable space} \cite{gillam}, and on top of that it carries a sort of ``log structure'' of its own. Precisely, it is has a \emph{positive log differentiable structure} \cite[Section 6.1]{molcho}, meaning a log structure on the space $X_\log$ equipped with the sheaf of monoids $\cR_{X_\log}^{\geq 0}$, where for a differentiable space $Y$ the sheaf $\cR_{Y}^{\geq 0}$ on $Y$ is the sheaf of functions of differentiable spaces to $\bR_{\geq 0}$.

In analogy with the other cases {(i.e. of log structures on algebraic, analytic and topological spaces)}, the stack of DF charts for this category of log structures would be the quotient $[\bRp(P)/\bR_{>0}(P)]$ {(see \cite[Proposition 3.10]{TVlogstr} for a precise statement, in a more general setting)}, and in fact the charts for $X_\log$ described in this section are of this form. The resulting structure on $X_\log$ coincides with the one of  \cite[Section 6.8]{molcho}, since the maps $X_\log\to [\overline{\bC}(P)/\bC^+(P)]$ factor through the ``Kato chart'' given by $X_\log\to (\bRps)(P)\to \bRp(P)$, in presence of a Kato chart $X\to (\Spec\bC[P])_\an$ for $X$.

\end{remark}

\subsection{The map to the infinite root stack}\label{subsec:morphism}

Let us show how the functorial interpretation of Theorem \ref{theorem:comparison} gives a globally defined morphism of topological stacks $X_\log\to \radice[n]{X}_\top$ for every $n$, and these assemble into a morphism of pro-topological stacks $X_\log\to \radice[\infty]{X}_\top$. We will check later (see Proposition \ref{prop:morphism}) that this morphism coincides with the one constructed in \cite[Proposition 4.1]{knvsroot}.

The point here is that the description of $X_\log$ as a root stack allows us to canonically extract $n$-th roots, as follows: for a topological space $\phi\colon T\to X_\top$ let us define $$\Phi_n(T) \colon X_\log(T)\to \radice[n]{X}_\top(T)$$ by sending a morphism of symmetric monoidal categories $\phi^{-1}A\to [\overline{\bC}/\bC^+]_T$ to the composite with the map $f_n \colon [\overline{\bC}/\bC^+]_T\to [\bC/\bC^\times]_T$,  induced by $\overline{\bC}\to \bC$ that sends $(x,y)$ to  $e^{(x+iy)/n}$ and $\bC^+\to \bC^\times$ that sends $z$ to $e^{z/n}$.

This gives an object of $\radice[n]{X}_\top$. In fact we have a commutative diagram
$$
\xymatrix{
[\overline{\bC}/\bC^+]_T \ar[r]^{f_n}\ar[rd]_{\exp} & [\bC/\bC^\times]_T \ar[d]^{\wedge n}\\ 
& [\bC/\bC^\times]_T   
}
$$
that shows that $\phi^{-1}A\to [\overline{\bC}/\bC^+]_T\xrightarrow{f_n} [\bC/\bC^\times]_T$ lifts the functor $\phi^{-1}L\colon \phi^{-1}A\to [\bC/\bC^\times]_T$ along the $n$-th power map $\wedge n\colon  [\bC/\bC^\times]_T\to  [\bC/\bC^\times]_T$. Here we are using the description of the functor of points of $\radice[n]{X}_\top$ given by Proposition {\ref{prop:root.comparison}}.

The resulting morphisms are compatible with respect to the projections $\radice[m]{X}_\top\to \radice[n]{X}_\top$ where $n\mid m$, and they give a morphism of pro-objects $X_\log\to \radice[\infty]{X}_\top$.

\begin{remark}
As in the previous discussions, we can exchange $[\overline{\bC}/\bC^+]$ with $[\bRps/\bC^\times]$. Note however that the above maps cannot be defined as equivariant maps $\bRps\to \bC$ (there is no section of the maps $z^n\colon S^1\to S^1$ or $\bC^\times\to \bC^\times$). 
\end{remark}

\begin{remark}[Real roots]\label{rmk:real.roots}
The point of the above construction is to make use of the morphism $\phi_{\frac{1}{n}}\colon [\overline{\bC}/\bC^+]\to [\overline{\bC}/\bC^+]$ induced by the maps $\overline{\bC}\to \overline{\bC}$ acting as $(x,y)\mapsto (x/n,y/n)$ and $\bC^+\to \bC^+$ given by $z\mapsto {z/n}$. This corresponds to extracting $n$-th roots, in that the diagram
$$
\xymatrix{
[\overline{\bC}/\bC^+]\ar[d]_\exp & [\overline{\bC}/\bC^+]\ar[d]^\exp \ar[l]_{\phi_{\frac{1}{n}}}\\
[\bC/\bC^\times]\ar[r]^{\wedge n} & [\bC/\bC^\times]
}
$$
commutes.

More generally one can consider $\phi_r\colon [\overline{\bC}/\bC^+]\to [\overline{\bC}/\bC^+]$ for any $r\in \bR_{> 0}$, given in the same way by  the maps $\overline{\bC}\to \overline{\bC}$, defined as $(x,y)\mapsto (rx,ry)$, and $\bC^+\to \bC^+$ given by $z\mapsto {rz}$. Note that $\phi_r$ is an isomorphism for every $r$, with inverse $\phi_{\frac{1}{r}}$.

Using these morphisms one can show that a lifting parametrized by the Kato-Nakayama space
$$
\xymatrix{
\phi^{-1}A\ar[r]^<<<<<L \ar[dr]_{L_{\overline{\bC}}} & [\bC/\bC^\times]_T\\
 & [\overline{\bC}/\bC^+]_T\ar[u]
}
$$
will induce a 2-commutative diagram
$$
\xymatrix{
\phi^{-1}A\ar[d]\ar[r]^<<<<<L & [\bC/\bC^\times]_T\\
\phi^{-1}A_{\bR_{\geq 0}} \ar[ur]_{L_\bR} & }
$$
{(where $A_{\bR_{\geq 0}}$ is the subsheaf of monoids of $A^\gp\otimes_\bZ \bR$ generated by sections of the form $a\otimes r$ with $a\in A$ and $r\in \bR_{\geq 0}$)} that can be seen as a ``real root'' of the log structure, by setting
$$
L_\bR\left(\sum_i r_i\cdot a_i\right)=\exp(\phi_{r_1}(L_{\overline{\bC}}(a_1)))\otimes\cdots\otimes \exp(\phi_{r_k}(L_{\overline{\bC}}(a_k)).
$$
where $r_i \in \bRp$ and $a_i$ are sections of $\phi^{-1}A$.

It is not clear whether these two kinds of lifting can be identified completely, especially without imposing any ``continuity'' conditions on the second type of diagrams. 
\end{remark}

\section{The Kato-Nakayama ``space'' of a log algebraic stack}\label{sec:5}

In this final section we observe that the Kato-Nakayama construction applies also to log algebraic stacks that are locally of finite type over the complex numbers (and to log analytic stacks) and produces a topological stack, and we relate this to the ``charts'' for the Kato-Nakayama space described in Section \ref{subsec:kncharts}. We also check that the morphism $X_\log\to \radice[\infty]{X}_\top$ that was described in Section \ref{subsec:morphism} coincides with the one of \cite[Proposition 4.1]{knvsroot}.

Denote by $\Logst$ the 2-category of locally of finite type fine log algebraic stacks over $\bC$, and by $\Topst$ the 2-category of topological stacks.

\begin{theorem}\label{thm:functor}
There is a morphism of 2-categories $(-)_\log\colon \Logst\to \Topst$ that preserves colimits, and extends the usual Kato-Nakayama functor on log algebraic spaces. This functor is uniquely determined (in the 2-categorical sense) by these properties.
\end{theorem}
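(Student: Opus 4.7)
The strategy is to follow the pattern recalled in Remark \ref{rmk:analytification.functor} for the analytification functor: define $(-)_\log$ on $\Logst$ as the left Kan extension of the Kato--Nakayama functor on log algebraic spaces along the Yoneda embedding into $\Logst$, taking values in $\Topst$. Explicitly, for a log algebraic stack $\cX$ this produces the formula
$$\cX_\log = \varinjlim_{X \to \cX} X_\log\,,$$
where the lax colimit in $\Topst$ is taken over strict morphisms from (fine) log algebraic spaces $X$ locally of finite type over $\bC$. Equivalently, I would choose a strict smooth groupoid presentation $R \rightrightarrows U$ of $\cX$ in log algebraic spaces and set $\cX_\log = [R_\log \rightrightarrows U_\log]$, interpreted as the topological stack presented by the resulting groupoid of topological spaces.

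The main point to verify, which I expect to be the principal technical obstacle, is that the Kato--Nakayama construction on log algebraic spaces sends \emph{strict} smooth (resp.\ strict \'etale) morphisms to topological submersions in the sense of Noohi (resp.\ local homeomorphisms), so that $R_\log \rightrightarrows U_\log$ is a groupoid whose structure maps are of the right type to present a topological stack in the sense recalled in Section \ref{sec:analytic.stacks}. Working locally, a strict morphism $f \colon X \to Y$ of log algebraic spaces pulls back a Kato chart for $Y$ to a Kato chart for $X$, and the chart description of Proposition \ref{prop:kncharts} (which applies to the algebraic setting after passing to analytifications) shows that the square
$$\xymatrix{X_\log \ar[r] \ar[d]_\tau & Y_\log \ar[d]^\tau \\ X_\top \ar[r] & Y_\top}$$
is cartesian. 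The relevant properties of $f_\log$ then reduce to the corresponding properties of $f_\top$, which are classical.

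The next step is to check Morita invariance, i.e.\ that $[R_\log \rightrightarrows U_\log]$ does not depend, up to canonical equivalence in $\Topst$, on the chosen presentation. This follows from the fact that any two strict smooth atlases for $\cX$ admit a common refinement by fibered products in $\Logst$, together with the compatibility of $(-)_\log$ with strict base change noted above. The same compatibility yields functoriality in morphisms of log algebraic stacks: a $1$-morphism $\cX \to \cY$ is represented by compatible morphisms of atlases (after pullback), which induce a morphism of the associated topological groupoids, and $2$-morphisms in $\Logst$ induce $2$-morphisms in $\Topst$ in the evident way.

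Finally, colimit preservation is built into the Kan extension description, and uniqueness follows from the universal property of left Kan extensions together with the observation that every object of $\Logst$ is the lax colimit of the simplicial diagram of its atlas, viewed as a diagram of log algebraic spaces: any colimit-preserving extension of $(-)_\log$ from log algebraic spaces to $\Logst$ must agree with the construction above on such colimit presentations, making the $2$-functor unique up to canonical equivalence.
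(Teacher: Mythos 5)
Your proposal is correct, and its core --- applying the Kato--Nakayama functor levelwise to a strict smooth groupoid presentation $R\rightrightarrows U$ and taking the quotient topological stack $[U_\log/R_\log]$ --- is exactly the paper's construction, which explicitly mimics \cite[Section 20]{No1}. The differences are in the packaging. You lead with the left Kan extension $\cX_\log=\varinjlim_{X\to\cX}X_\log$; the paper deliberately avoids this, mentioning it only in a closing remark as the route ``using more sophisticated machinery'' along the lines of \cite[Theorem 3.1]{knvsroot}. That framing buys you colimit preservation and uniqueness essentially for free from the universal property, whereas the paper's proof only constructs the functor and leaves those two properties implicit. Conversely, for Morita invariance the paper is more concrete than your ``common refinement'' sketch: it invokes the criterion of \cite[Tag 046R]{stacks-project} (local sections of $t\circ\pi_1\colon R'\times_{U'}U\to U'$ and the condition that $R\to(U\times U)\times_{U'\times U'}R'$ be an isomorphism), notes that both conditions are phrased in terms of finite limits of strict maps, and concludes they are preserved because $(-)_\log$ preserves finite limits. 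Your justification that strict smooth morphisms go to topological submersions --- via the cartesian square $X_\log\cong X_\top\times_{Y_\top}Y_\log$ for strict $f$, reducing to the classical statement for $f_\top$ --- is a clean expansion of a step the paper merely asserts. Both routes are sound; yours front-loads the abstract machinery, the paper's stays at the level of explicit groupoid presentations.
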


\begin{remark}
The preceding theorem is valid also if we replace the 2-category $\Logst$ by the 2-category of fine log analytic stacks. Let us note that the analytification $LOG_\an$ of Olsson's stack $LOG$ (see \cite{Ols}) is the stack that parametrizes fine log structures on analytic spaces, that we denote temporarily  by $LOG_\bC$. 

In fact, the discussion in \cite[Section 5]{Ols} describes the stack $LOG$ as the colimit in the category of stacks of the diagram, indexed by finitely generated integral monoids, of the toric stacks $[\Spec\bC[P]/\widehat{P}]$, with the natural maps between them. Since the local toric models are ``the same'', that discussion applies also to the analytic stack $LOG_\bC$ that parametrizes fine log structures on analytic spaces, which is then the colimit, indexed by the same category, of the stacks $[(\Spec\bC[P])_\an/\widehat{P}_\an]$.

Finally the analytification functor preserves colimits, and there is a natural isomorphism $[\Spec\bC[P]/\widehat{P}]_\an\cong [(\Spec\bC[P])_\an/\widehat{P}_\an]$ for any fine monoid $P$, so we obtain an induced canonical isomorphism $LOG_\an\cong LOG_{\bC}$.

The functor $(-)_\log$ can be applied to the analytic stack $LOG_\an$ to obtain a ``universal'' Kato-Nakayama space, in the sense that for every fine log analytic space (or stack) $X$ there is a cartesian square of topological stacks
$$
\xymatrix{
X_\log\ar[r]\ar[d] & LOG_\log\ar[d]\\
X_\top \ar[r] & LOG_\top.
}
$$
\end{remark}

\begin{remark}
As it happens for the Kato-Nakayama space (see Remark \ref{subsection:differentiable.structure}), for every log algebraic (or analytic) stack $\cX$ the topological stack $\cX_\log$ should have a structure of a ``(positive log) differentiable stack'' over the real numbers (the terminology is a bit awkward, since ``differentiable stack'' already has a meaning in the smooth differentiable world).
\end{remark}

\begin{proof}[Proof of Theorem \ref{thm:functor}]

We mimic the proof in \cite[Section 20]{No1}.

Assume that $R\rightrightarrows U$ is a presentation of a log algebraic stack $\cX$. Then both $U$ and $R$ have induced log structures, and the structure maps of the groupoid presentation are strict. Since the Kato-Nakayama functor preserves finite limits, we see that the resulting $R_\log\rightrightarrows U_\log$ is a groupoid in topological spaces.

Since the structure maps of $R\rightrightarrows U$ are smooth and strict, the structure maps of $R_\log\rightrightarrows U_\log$ are ``locally cartesian maps with euclidean fibers'' in Noohi's terminology (see  \cite{No1}). 
We define $\cX_\log$ to be the quotient stack $[U_\log/R_\log]$.
We sketch an argument to justify that this is independent of the groupoid presentation and extends to a functor, leaving most of the 2-categorical details to the reader.

Given another presenting groupoid $R'\rightrightarrows U'$ of $\cX$, we can find a third one $R^{''}\rightrightarrows U^{''}$ that has a map to both of these, inducing isomorphisms between the associated stacks. We will check that the induced morphisms $[U^{''}_\log/R^{''}_\log]\to [U^{'}_\log/R^{'}_\log]$ and $[U^{''}_\log/R^{''}_\log]\to [U_\log/R_\log]$ are isomorphisms. In this way we get an isomorphism $[U^{'}_\log/R^{'}_\log]\to [U_\log/R_\log]$ that depends on the choice of the third groupoid, but is unique up to a unique isomorphism. This defines the functor on objects.

Let us check that a map of groupoids $(R\rightrightarrows U)\to (R'\rightrightarrows U')$ in algebraic spaces that induces an isomorphism of quotient stacks gives an isomorphism $[U_\log/R_\log]\cong [U'_\log/R'_\log]$. We use the following fact (see \cite[Tag 046R]{stacks-project}): a morphism of groupoids as above in an arbitrary site induces an isomorphism between the quotient stacks if and only if
\begin{itemize}
\item [(i)] the composite $t\circ \pi_1\colon R'\times_{U'}U\to U'$ locally admits sections, and
\item [(ii)] the natural map $R\to (U\times U)\times_{U'\times U'} R'$ is an isomorphism.
\end{itemize}
In our situation, since $(R\rightrightarrows U)\to (R'\rightrightarrows U')$ induces an isomorphism on the quotient stacks, we infer that (i) and (ii) hold. Using the fact that all maps are strict and by applying the functor $(-)_\log$ to all diagrams, we can conclude that (i) and (ii) also hold for the map of groupoids $(R_\log \rightrightarrows U_\log)\to (R'_\log \rightrightarrows U'_\log)$ in topological spaces. Thus, the induced $[U_\log/R_\log]\to [U'_\log/R'_\log]$ is an isomorphism.

On 1-arrows, given a morphism $f\colon \cX\to \cY$ we can find presenting groupoids of $\cX$ and $\cY$ and a map between those, that induces $f$. We use the above construction to obtain a morphism $f_\log\colon \cX_\log\to \cY_\log$, and this again turns out to be unique up to a unique isomorphism.

The effect on natural transformation is uniquely determined by the above. 
\end{proof}

Note that by construction for any log algebraic (or analytic) stack $\cX$ there is a projection $\tau\colon \cX_\log\to \cX_\top$.

\begin{remark}
A more general statement could be proved along the lines of Theorem 3.1 (and the discussion that follows) of \cite{knvsroot}, using more sophisticated machinery.
\end{remark}

\begin{example}
The Kato-Nakayama space of $[\bA^1/\bG_m]$ is the topological stack $[\bR_{\geq 0}\times S^1/\bC^\times]$, for the usual action. 
More generally, for a fine saturated monoid $P$ the Kato-Nakayama space of the quotient $[(\Spec \bC[P])_\an/\widehat{P}_\an]$ is the stack $[(\bR_{\geq 0}\times S^1)(P)/\bC^\times(P)]$ that appears in the discussion of Section \ref{subsec:kncharts}.
\end{example}

Let us show that the morphism constructed in Section \ref{subsec:morphism} coincides with the morphism $\Phi_X$ of \cite[Section 4]{knvsroot}.

\begin{proposition}\label{prop:morphism}
The morphism $X_\log\to \radice[\infty]{X}_\top$ of Section \ref{subsec:morphism} coincides with the morphism $\Phi_X$ constructed in \cite[Proposition 4.1]{knvsroot}.
\end{proposition}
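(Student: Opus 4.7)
The plan is to reduce everything to a local verification using a Kato chart, unwind both morphisms explicitly in the resulting local coordinates, and check that the formulas literally coincide. The morphism $\Phi_X$ of \cite[Proposition 4.1]{knvsroot} is defined by first constructing it on an open cover of $X$ consisting of opens that admit a Kato chart, and then gluing; our morphism of Section \ref{subsec:morphism}, on the other hand, is given by a chart-independent global recipe, but via Proposition \ref{prop:kncharts} it acquires an explicit chart-level description on the same kind of cover. Since both morphisms are uniquely determined by their restrictions to such a cover, it suffices to check that these restrictions agree for a fixed chart $X\to \bA(P)$ with $P$ fine, saturated and torsion-free.

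In that local setup, Proposition \ref{prop:kncharts} identifies $X_\log$ with the pullback of $[\overline{\bC}(P)/\bC^+(P)]\to [\bC(P)/\bC^\times(P)]$ along $X_\top$, while Proposition \ref{prop:root.comparison} and the proof of Proposition \ref{prop:rootDM} give $\radice[n]{X}_\top\cong [(X_n)_\top/\mu_n(P)_\top]$ with $X_n=X\times_{\bA(P)}\bA(\tfrac{1}{n}P)$. Under these identifications, the map from Section \ref{subsec:morphism} is induced at the level of charts by
$$
[\overline{\bC}(P)/\bC^+(P)]\longrightarrow [\bC(\tfrac{1}{n}P)/\bC^\times(\tfrac{1}{n}P)]
$$
coming componentwise from the factorization $\exp=(\wedge n)\circ f_n$, where $f_n$ is $(x,y)\mapsto e^{(x+iy)/n}$ on $\overline{\bC}$ and $z\mapsto e^{z/n}$ on $\bC^+$. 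Passing to the model $[\bRps/\bC^\times]$ as in Section \ref{subsec:kncharts}, this is the ``extract an $n$-th root in both the radial and the angular coordinate'' map, well-defined on the quotient precisely up to the $\mu_n(P)_\top$-action.

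The morphism $\Phi_X$ of \cite[Section 4]{knvsroot} is constructed, in the presence of the same Kato chart, by exactly the same prescription: a point $(x,\phi)\in X_\log$ is sent to its compatible system of $n$-th roots, obtained by taking the $n$-th root of $|\alpha(m)|\in \bR_{\geq 0}$ and a lift of $\phi(m)\in S^1$ along $S^1\xrightarrow{z^n}S^1$ for each $m\in P$. Under the identification $X_\log\cong X_\top\times_{\bA(P)_\top}\bRps(P)$, this is literally the factorization through $\bRps(\tfrac{1}{n}P)$ described above, composed with $\bRps(\tfrac{1}{n}P)\to \bC(\tfrac{1}{n}P)=\bA(\tfrac{1}{n}P)_\top$. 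Hence the two morphisms have identical chart-level descriptions, and their compatibility across $n$ is in both cases built into the definition.

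The only remaining point, which I expect to be the main obstacle to write down cleanly rather than conceptually, is the verification on overlaps of Kato charts. For $\Phi_X$ this compatibility was established in \cite[Section 4]{knvsroot}; for our morphism it is automatic from the chart-independent functorial characterization of Theorem \ref{theorem:comparison}. Once these local chart-level formulas are matched, the two morphisms agree on an open cover of $X_\top$ and must therefore coincide globally, both as maps to each $\radice[n]{X}_\top$ and, by passing to the limit, as maps to the pro-topological stack $\radice[\infty]{X}_\top$.
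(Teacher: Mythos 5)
Your overall strategy (reduce to a Kato chart and compare explicit descriptions) is in the same spirit as the paper's, but there are two places where your argument has a genuine gap, and they are precisely the places the paper's proof is engineered to avoid. First, your chart-level description of $\Phi_X$ --- send $(x,\phi)$ to the $n$-th root of $|\alpha(m)|$ and a lift of $\phi(m)$ along $z\mapsto z^n$ --- does not by itself define a morphism to the stack $\radice[n]{X}_\top$: the lift is only defined up to $\mu_n$, and a morphism to a quotient stack is not a pointwise assignment modulo the group action but an actual object of the groupoid over each $T$ (a $\mu_n(P)$-torsor with equivariant data). The construction in \cite{knvsroot} produces this object by exhibiting $(X_n)_\log\to X_\log$ as a $\mu_n(P)_\an$-torsor, and your proposal never compares that torsor with the object produced by the factorization through $f_n$. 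Second, and more seriously, your claim that ``both morphisms are uniquely determined by their restrictions to such a cover'' is where the argument breaks: two morphisms to a stack that are $2$-isomorphic on each piece of a cover need not be $2$-isomorphic globally. What must be checked on overlaps is not the well-definedness of each morphism separately (which is what you appeal to), but the compatibility of the local comparison $2$-isomorphisms $\eta_i\colon \Phi_X|_{U_i}\Rightarrow \Psi|_{U_i}$; this is exactly the content you defer and never supply.

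The paper sidesteps both problems by first using Theorem \ref{thm:functor} together with \cite[Proposition 4.4]{knvsroot} to identify $\Phi_n$ \emph{globally} with the composite $X_\log\cong(\radice[n]{X})_\log\to\radice[n]{X}_\top$, where the first arrow is the inverse of the canonical isomorphism $(\radice[n]{X})_\log\to X_\log$. Once both morphisms are globally defined, the comparison in the presence of a chart reduces, via the universal property of the fiber product $\radice[n]{X}_\top=X_\top\times_{[\bC(P)/\bC^\times(P)]}[\bC(\tfrac{1}{n}P)/\bC^\times(\tfrac{1}{n}P)]$, to checking that two maps $(\radice[n]{X})_\log\to[\overline{\bC}(\tfrac{1}{n}P)/\bC^+(\tfrac{1}{n}P)]$ agree, which follows from $\phi_{\frac{1}{n}}(P)$ being inverse to $\phi_n(P)$ in the commutative cube. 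To salvage your approach you would need either to carry out the torsor comparison and the cocycle check on overlaps explicitly, or to adopt the paper's device of routing everything through $(\radice[n]{X})_\log$.
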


\begin{proof}
First,  in light of the construction of Theorem \ref{thm:functor} we can reinterpret Proposition 4.4 of \cite{knvsroot} as follows: if $X$ is a fine saturated log algebraic space locally of finite type over $\bC$ (or a fine saturated log analytic space), then for every positive integer $n$ the canonical morphism $(\radice[n]{X})_\log\to X_\log$ is an isomorphism.

Indeed, it is proven there that, locally where $X$ has a Kato chart, and thus the root stack has a presentation $\radice[n]{X}=[X_n/\mu_n(P)]$, the map $(X_n)_\log\to X_\log$ is a $\mu_n(P)_\an$-torsor. On the other hand the stack $(\radice[n]{X})_\log$ is identified with the quotient stack $[(X_n)_\log/\mu_n(P)_\an]$ (note $\mu_n(P)_\log=\mu_n(P)_\an$ since the log structure is trivial), and hence turns out to be isomorphic to $X_\log$ via the natural map.

Note that since for every $n$ there is a projection $(\radice[n]{X})_\log\to\radice[n]{X}_\top$, this produces a canonical morphism $X_\log\cong (\radice[n]{X})_\log\to\radice[n]{X}_\top$, that manifestly coincides with the $\Phi_n$ constructed in \cite[Section 4]{knvsroot}.

Now let us check that it also agrees with the natural transformation described in Section \ref{subsec:morphism}.

The point is the following commutative diagram:
$$
\xymatrix{
&(\radice[n]{X})_\log \ar[ld]_{\cong} 
\ar[dd]|!{[d]}\hole  \ar[rr] & &[\overline{\bC}(\frac{1}{n}P)/\bC^+(\frac{1}{n}P)]\ar[dd] \ar[dl]^{\phi_n(P)}_{\cong}\\ 
X_\log\ar[dd]\ar[rr] \ar@{-->}[rd] && [\overline{\bC}(P)/\bC^+(P)]\ar[dd] \ar@/_2.0pc/@{..>}[ru]_{\phi_{\frac{1}{n}}(P)} & \\
& \radice[n]{X}_\top \ar[rr]|!{[r]}\hole  \ar[ld]& &  [\bC(\frac{1}{n}P)/\bC^\times(\frac{1}{n}P)] \ar[ld]\\
X_\top \ar[rr]
& & [\bC(P)/\bC^\times(P)] & 
}
$$
where {the bottom, front and back side of the cube (and hence also the top) are cartesian, and} $\phi_n(P), \phi_{\frac{1}{n}}(P)$ are defined in the same way as the analogous maps of Remark \ref{rmk:real.roots}.

The morphism $X_\log\to \radice[n]{X}_\top$ described in Section \ref{subsec:morphism} is determined (using the functorial interpretation of $X_\log$) by the composite $$\textstyle X_\log\to [\overline{\bC}(P)/\bC^+(P)]  \xrightarrow{\phi_{\frac{1}{n}}(P)} [\overline{\bC}(\frac{1}{n}P)/\bC^+(\frac{1}{n}P)]\to [\bC(\frac{1}{n}P)/\bC^\times(\frac{1}{n}P)]$$ as the induced map $$X_\log\to X_\top\times_{[\bC(P)/\bC^\times(P)]} [\bC(\tfrac{1}{n}P)/\bC^\times(\tfrac{1}{n}P)]= \radice[n]{X}_\top.$$
On the other hand, the morphism of \cite[Proposition 4.1]{knvsroot}, as per the discussion at the beginning of the proof, is determined by the composite $$X_\log\to (\radice[n]{X})_\log\to \radice[n]{X}_\top$$
where the first arrow is the inverse of the isomorphism in the diagram above.

This implies that the two maps $X_\log\to \radice[n]{X}_\top$ that we just described coincide (in the 2-categorical sense). In fact, it is equivalent to check that the two maps $(\radice[n]{X})_\log\to \radice[n]{X}_\top$ given by the vertical arrow and by the composite $(\radice[n]{X})_\log\to X_\log\to \radice[n]{X}_\top$ coincide. The vertical arrow is determined as the map $(\radice[n]{X})_\log\to  X_\top\times_{[\bC(P)/\bC^\times(P)]} [\bC(\tfrac{1}{n}P)/\bC^\times(\tfrac{1}{n}P)]= \radice[n]{X}_\top$ induced by the two maps
\begin{equation}\label{eq:100}
(\radice[n]{X})_\log\to [\overline{\bC}(\tfrac{1}{n}P)/\bC^+(\tfrac{1}{n}P)]\to  [\bC(\tfrac{1}{n}P)/\bC^\times(\tfrac{1}{n}P)]
\end{equation}
and 
$$
(\radice[n]{X})_\log\to X_\log\to X_\top
$$
and the second one is determined likewise, by the two maps
\begin{equation}\label{eq:200}
(\radice[n]{X})_\log\to X_\log\to[\overline{\bC}(P)/\bC^+(P)]  \xrightarrow{\phi_{\frac{1}{n}}(P)} [\overline{\bC}(\tfrac{1}{n}P)/\bC^+(\tfrac{1}{n}P)]\to  [\bC(\tfrac{1}{n}P)/\bC^\times(\tfrac{1}{n}P)]
\end{equation}
and 
$$
(\radice[n]{X})_\log\to X_\log\to X_\top.
$$
The claim now follows from the fact that the two maps $(\radice[n]{X})_\log\to [\overline{\bC}(\tfrac{1}{n}P)/\bC^+(\tfrac{1}{n}P)]$ in (\ref{eq:100}) and (\ref{eq:200}) coincide, since the morphism $\phi_{\frac{1}{n}}(P)\colon   [\overline{\bC}(P)/\bC^+(P)]\to  [\overline{\bC}(\tfrac{1}{n}P)/\bC^+(\tfrac{1}{n}P)]$ is the inverse of $\phi_n(P)\colon [\overline{\bC}(\tfrac{1}{n}P)/\bC^+(\tfrac{1}{n}P)] \to  [\overline{\bC}(P)/\bC^+(P)]$.
\end{proof}

\begin{remark}
To conclude, let us point out that if we equip every Kato-Nakayama space (or stack) $X_\log$ with its sheaf of rings $\cO_X^\log$ defined in \cite{KN}, then the isomorphism $X_\log\cong (\radice[n]{X})_\log$ is \textbf{not} an isomorphism of ringed topological stacks. This is further explored in \cite{parabolic} (see Section 5.1 in particular), in relation to question $(2)$ mentioned in the introduction.
\end{remark}

\bibliographystyle{alpha}
\bibliography{biblio}

\end{document}